\documentclass{article}

\usepackage[paper=a4paper,left=30mm,right=30mm,top=25mm,bottom=25mm]{geometry}
\usepackage{amsmath,amsfonts,amssymb,amsthm,mathtools}
\usepackage{graphicx}
\usepackage[numbers]{natbib}
\usepackage{hyperref}

\hypersetup{
    colorlinks=true,
    linkcolor=blue,
    citecolor=blue,
    urlcolor=blue
}

\numberwithin{equation}{section}

\theoremstyle{plain}
\newtheorem{theorem}{Theorem}[section]
\newtheorem{lemma}[theorem]{Lemma}
\newtheorem{proposition}[theorem]{Proposition}

\theoremstyle{definition}
\newtheorem{example}[theorem]{Example}
\newtheorem{definition}[theorem]{Definition}

\newcommand{\de}{\mathrm{\,d}}
\newcommand{\Prob}{\mathbb{P}}
\newcommand{\E}{\mathbb{E}}

\newcommand{\R}{\mathbb{R}}
\newcommand{\keywords}[1]{\par\medskip\noindent\textbf{Keywords:} #1}

\title{Kendall and Spearman bounds for Chatterjee's rank correlation under positive dependence}
\author{Marcus Rockel}
\date{\today}

\begin{document}
 
\maketitle
\begin{center}
	\small\textit{
		Department of Quantitative Finance,\\
		Institute for Economics, University of Freiburg,\\
		Rempartstr.\;16, 79098 Freiburg, Germany,\\
        \texttt{marcus.rockel@finance.uni-freiburg.de} \\[2mm]
	}
\end{center}

\begin{abstract}
We compare Chatterjee's rank correlation \(\xi\) with Kendall's \(\tau\) and Spearman's \(\rho\) under positive-dependence assumptions on bivariate copulas.
Our main technical contribution is a sharp order-violation bound for two stochastically ordered distribution functions.
This local inequality controls each conditional order-violation probability appearing in Kendall's tau by the cross-rank variance functionals that determine Chatterjee's rank correlation.
As a consequence, we prove the sharp Kendall bound
\(
    \xi(C)\leq \tau(C)
\)
for every stochastically increasing copula \(C\).
The bound is best possible: ordinal sums of product copulas attain equality.
We also prove that the weaker left-tail decreasing (LTD) and right-tail increasing (RTI) conditions jointly imply the Spearman bound
\(
    \xi(C)\leq \rho(C),
\)
with equality if and only if \(C\) is either the independence or comonotonicity copula.
Finally, checkerboard examples show that LTD or RTI alone does not imply \(\xi(C)\leq\rho(C)\), that LTD and RTI together do not imply \(\xi(C)\leq\tau(C)\), and that both bounds are directional for \(\xi\).
\end{abstract}

\keywords{copula; dispersion; Kendall's tau; Markov product; Spearman's rho; stochastic increasingness; stochastic order; tail monotonicity}
\par\smallskip\noindent\textbf{MSC 2020:} Primary 62H20; Secondary 62H05, 60E15.

\section{Introduction}

Rank-based measures of dependence are central tools in nonparametric statistics.
Kendall's tau and Spearman's rho are classical rank-based measures of concordance in the sense of \cite{scarsini1984measures}: they quantify the tendency of two variables to be ordered in the same direction, and hence capture positive or negative association in terms of joint ranks.
Chatterjee's rank correlation, introduced in \cite{chatterjee2020}, is of a different nature, as it is asymmetric and measures the degree to which one variable is a measurable function of another.
Like Kendall's tau and Spearman's rho, in the case of continuous marginals, it depends only on the underlying dependence structure, and in this case it coincides with the measure in \cite{dette2013copula}.
The precise definitions are given in Section~\ref{sec:preliminaries}.

Let \(C\) be a bivariate copula, that is, a distribution function on \([0,1]^2\) with uniform marginals.
Let \(\xi(C)\) denote Chatterjee's rank correlation, \(\tau(C)\) Kendall's tau, and \(\rho(C)\) Spearman's rho.
For stochastically increasing copulas, as defined in \eqref{eq:si_definition} below, it is known that
\begin{equation}\label{eq:known-inequalities}
\tau(C)\leq\rho(C) \quad \text{and} \quad \xi(C)\leq\rho(C).
\end{equation}
The first inequality is a classical result from \cite{caperaa1993spearmans} (see also \cite[Thm.~5.2.8]{nelsen2006introduction}), while the second inequality is proved in \cite{ansari2026exact}.
Stochastic increasingness is a natural positive-dependence condition that is satisfied by many standard copula families, see \cite{ansari2023dependence} and the references therein.
Numerical evidence across a wide variety of copula families led \cite{ansari2023dependence} to conjecture that
\(
    \xi(C)\leq\tau(C)
\)
also holds for stochastically increasing copulas.
Our main result confirms this conjecture:

\begin{theorem}[SI implies \(\xi\leq\tau\)]
\label{thm:main}
If \(C\) is a stochastically increasing bivariate copula,
then
\[ 
    \xi(C)\leq \tau(C).
\]
Similarly, if \(C\) is stochastically decreasing, then \(\xi(C)\leq -\tau(C)\), and both inequalities are sharp.
\end{theorem}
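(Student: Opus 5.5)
We work with the conditional distribution functions $F_u(v):=\partial_1 C(u,v)$, i.e.\ the law of $V$ given $U=u$. By \eqref{eq:si_definition}, stochastic increasingness means $F_u(v)$ is nonincreasing in $u$ for every $v$, so $u<u'$ implies $F_u\geq F_{u'}$. The plan is to write both $\xi$ and $\tau$ as integrals over pairs $u<u'$, and to compare the integrands pair by pair through a local inequality for two stochastically ordered distribution functions.

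\emph{Step 1 ($\xi$ as a pair functional).} Start from $\xi(C)=6\int_0^1\!\int_0^1 F_u(v)^2\de u\de v-2$. Uniform margins give $\int_0^1 F_u(v)\de u=v$, hence $\int\!\!\int\!\!\int F_uF_{u'}\de v\de u\de u'=1/3$. Expanding the square then yields
\[
\xi(C)=6\iint_{u<u'}\int_0^1\bigl(F_u(v)-F_{u'}(v)\bigr)^2\de v\de u\de u'.
\]

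\emph{Step 2 ($\tau$ as a pair functional).} Let $X_u\sim F_u$ be drawn independently for different $u$. Kendall's tau is $1-2$ times the probability of discordance, so
\[
\tau(C)=1-4\iint_{u<u'}\Prob(X_u>X_{u'})\de u\de u'=2\iint_{u<u'}\bigl(1-2\Prob(X_u>X_{u'})\bigr)\de u\de u'.
\]
Under SI, $1-2\Prob(X_u>X_{u'})=\Prob(X_u<X_{u'})-\Prob(X_u>X_{u'})\ge0$, and by Fubini this equals $\int (F_u-F_{u'})\,\de\bigl(\text{mixture of } F_u,F_{u'}\bigr)$ (with the usual half-weights on atoms).

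\emph{Step 3 (local bound and aggregation) --- the main obstacle.} A naive pointwise inequality $3\int_0^1(F-G)^2\de v\le 1-2\Prob(X>Y)$ for $F\ge G$ is false: point masses at $0$ and $1$ give $3>1$. The problem is that Lebesgue measure in $v$ is not intrinsic to the pair $(F,G)$. I would therefore first prove a sharp inequality of the form
\[
\Prob(X>Y)\le \tfrac12-\Phi(F,G),
\]
where $\Phi$ is a quadratic ``cross-rank variance'' functional, e.g.\ $\int (F-G)^2\,\de\bigl(\tfrac{F+G}{2}\bigr)$ or a variance of $F(Y)$ and $G(X)$ in rank scale. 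Equality should hold when the supports of $F$ and $G$ are separated or when $F=G$. I would prove it by approximating by discrete laws and doing an exchange/majorization argument, or by writing $F-G\ge0$ and applying Cauchy--Schwarz-type estimates to $\int(F-G)\de(F+G)$. Then I would integrate over $u<u'$ and use uniformity of the $V$-margin, $\int F_u\de u=v$, to convert $\iint\Phi(F_u,F_{u'})$ into at least $3\iint\int(F_u-F_{u'})^2\de v$. This conversion is where the SI ordering of the whole family, not just of the pair, must be used. If the first choice of $\Phi$ does not aggregate correctly, I would try the variant of $\Phi$ that makes the ordinal-sum case an identity and adjust from there.

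\emph{Step 4 (sharpness and the decreasing case).} For an ordinal sum of product copulas on intervals of lengths $a_i$, both sides equal $\sum a_i^2$, by direct computation (or by checking that every local inequality is an equality), so the bound is attained. If $C$ is stochastically decreasing, then $C^{\downarrow}(u,v)=u-C(u,1-v)$ is SI. Here $\xi(C^\downarrow)=\xi(C)$, since the map $v\mapsto1-v$ preserves the functional-dependence structure and $\int(1-F_u)^2$ yields the same value of $\xi$. Also $\tau(C^\downarrow)=-\tau(C)$. Hence $\xi(C)\le-\tau(C)$, with sharpness transferred from the SI case.
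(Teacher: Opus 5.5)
\textbf{Step 3 has a genuine gap.} Steps 1, 2 and 4 are fine, apart from a slip in Step 4: both coefficients equal $1-\sum_i a_i^2$, not $\sum_i a_i^2$. Step 3, however, is the entire content of the theorem, and you leave it as a search plan. You never fix the local inequality, and you give no argument for it.

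Your first candidate is in fact the right one, but only with the constant $\tfrac32$. Taken literally, $\Phi=\int(F-G)^2\de\frac{F+G}{2}$ only yields $\tau\geq\tfrac23\xi$. For continuous $F\geq G$, the identities $\int F\de G+\int G\de F=1$ and $\int F^2\de G+2\int FG\de F=1$ give
\[
    \tfrac12-\tfrac32\int_0^1(F-G)^2\,\de\Bigl(\tfrac{F+G}{2}\Bigr)
    =
    \tfrac32\Bigl\{\int_0^1F(1-F)\de G+\int_0^1G(1-G)\de F\Bigr\},
\]
which is exactly the bound of Theorem~\ref{thm:pairwise}.

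With this choice the aggregation is an exact identity. It uses only $\int_0^1F_u(v)\de u=v$ and $\int F_u^2\de F_u=\tfrac13$:
\[
    \iint\int(F_u-F_{u'})^2\de\tfrac{F_u+F_{u'}}{2}\de u\de u'
    =
    \iint\int(F_u-F_{u'})^2\de v\de u\de u'.
\]
So, contrary to what you expect, SI is not used in the conversion at all. It enters only pairwise, to guarantee $F_u\geq F_{u'}$ for $u<u'$.

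What is really missing is a proof of the local bound, and neither ``exchange/majorization'' nor Cauchy--Schwarz points to one. The paper's key idea is the quantile transform $\eta:=G\circ F^{-1}$. This map is non-decreasing with $\eta\leq\mathrm{id}$, and it gives $\Prob(X>Y)=\int_0^1\eta(t)\de t$. It also turns the right-hand side into $\tfrac32\int_0^1\eta(t)(2t-\eta(t))\de t$. With $\delta=\mathrm{id}-\eta$, everything reduces to $\int_0^1\delta^2\leq\tfrac23\int_0^1\delta$ (Lemma~\ref{lem:monotone-eta}). That inequality follows from convexity in $\eta$: the extreme monotone grid step functions below the diagonal are staircases pinned to the diagonal, and each block of length $\ell$ contributes $\tfrac{\ell^2}{3}(\ell-1)\leq0$.

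\textbf{The second gap is regularity.} Your proposed route through discrete laws fails as stated, in both forms of the bound:
\begin{itemize}
\item For $F=\delta_a$ and $G=\delta_b$ with $a<b$, the $\Phi$-form gives $\tfrac12-\tfrac32\cdot\tfrac12=-\tfrac14<0=\Prob(X>Y)$.
\item In the form of Theorem~\ref{thm:pairwise}, take $F=G$ with two atoms of masses $p<\tfrac13$ and $1-p$. Then $\Prob(X>Y)=p(1-p)>3p^2(1-p)$.
\item The aggregation identity also fails for atomic $F_u$, since it uses $\int F_u^2\de F_u=\tfrac13$.
\end{itemize}
The conditional laws of an SI copula can be atomic for a non-null set of $u$; this already happens for $C=M$. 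You therefore need one of two things. The first option is a tie-corrected local inequality valid for all laws. The second is what the paper does:
\begin{itemize}
\item prove the bound for SI copulas whose conditional distribution functions are continuous and strictly increasing;
\item regularize by the Markov product $C_r=C*G_r$ with a Gaussian copula, which preserves SI and makes the kernel regular;
\item let $r\uparrow1$, using a continuity result for $\xi$ along SI approximations. This result is needed because $\xi$ is not continuous under pointwise convergence of copulas.
\end{itemize}
Your proposal contains neither option.
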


The proof is based on a sharp two-sample stochastic-order inequality which is
not specific to copulas and may be of independent interest.
For two stochastically ordered laws \(F\geq G\), \(X\sim F\) and \(Z\sim G\), Theorem~\ref{thm:pairwise} bounds the pairwise order-violation probability \(\Prob(X>Z)\) by a symmetrized cross-rank variance functional of \(F\) and \(G\).
When \(F\) and \(G\) are two conditional distribution functions of an SI copula, this pairwise inequality controls the local discordance term \(D(t,s)\) in Kendall's tau from \eqref{eq:local_discordance} below.
Integrating over \(0<t<s<1\) then yields the global bound \(\xi(C)\leq\tau(C)\).
Thus, the argument proves a stronger pointwise order-violation estimate before passing to the rank-correlation comparison.
Together with \eqref{eq:known-inequalities}, we obtain the chain $0\leq \xi(C)\leq \tau(C)\leq \rho(C)$ for every stochastically increasing copula $C$.
Theorem~\ref{thm:main} strengthens the known Spearman bound \(\xi(C)\leq\rho(C)\) from \cite{ansari2026exact} to the sharper Kendall bound \(\xi(C)\leq\tau(C)\) under the same SI assumption.

We give a second strengthening of the known Spearman bound from \cite{ansari2026exact} by the following result.
It shows that the same weaker tail monotonicity assumptions of left-tail decreasingness and right-tail increasingness, as defined below in \eqref{eq:ltd_definition} and \eqref{eq:rti_definition}, already suffice for \(\xi(C)\leq\rho(C)\), and that equality is achieved only for the independence and comonotonicity copulas $\Pi$ and $M$ defined in Section~\ref{sec:preliminaries}.
Similarly, under reversed monotonicity assumptions, only the independence and countermonotonicity copulas $\Pi$ and $W$ achieve equality.
Up to interchanging the coordinates, left-tail decreasingness and right-tail increasingness are the tail conditions used by Capéraà and Genest in \cite{caperaa1993spearmans} to prove \(\tau(C)\leq\rho(C)\).
In their symmetric setting the coordinate order is immaterial.
For \(\xi\), however, the order has to agree with the direction of $\xi$.
We therefore use definitions of SI, LTD, and RTI as in \cite[Ch.~5]{nelsen2006introduction}, see Section~\ref{sec:preliminaries} below.
Counterexamples for the reversed order are given in Example~\ref{rem:si-kendall-bound-directional} and Example~\ref{rem:ltd-rti-spearman-bound-directional}.

\begin{theorem}[LTD and RTI together imply \(\xi\leq\rho\)]
\label{thm:ltd-rti-xi-rho}
Let \(C\) be left-tail decreasing and right-tail increasing.
Then
\[
    \xi(C)\leq \rho(C),
\]
with equality if and only if \(C\in\{\Pi,M\}\).
Similarly, if \(C\) is left-tail increasing and right-tail decreasing, then
\(
    \xi(C)\leq -\rho(C),
\)
with equality if and only if \(C\in\{\Pi,W\}\).
\end{theorem}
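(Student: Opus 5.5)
The plan is to reduce the global inequality to a one-dimensional inequality for each fixed \(v\), using the two tail conditions as two-sided pointwise bounds on the conditional distribution function \(h_u(v):=\partial_1 C(u,v)\). Writing \(\xi(C)=6\int_0^1\!\int_0^1 h_u(v)^2\de u\de v-2\) and \(\rho(C)=12\int_0^1\!\int_0^1 C(u,v)\de u\de v-3\), the claim \(\xi\le\rho\) is equivalent to \(\iint h^2\le 2\iint C-\tfrac16\). Since \(\int_0^1 v(1-v)\de v=\tfrac16\), I will aim for the stronger slice-wise inequality
\begin{equation*}
\int_0^1 h_u(v)^2\de u\;\le\; 2\int_0^1 C(u,v)\de u - v(1-v)\qquad\text{for every } v\in[0,1].
\end{equation*}
A sanity check supports this target: it is an equality both for \(\Pi\), where both sides equal \(v^2\), and for \(M\), where both sides equal \(v\).

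Next I translate the hypotheses. Fix \(v\) and set \(c(u)=C(u,v)\), so that \(c'(u)=h_u(v)\) for almost every \(u\). LTD in the sense of \cite[Ch.~5]{nelsen2006introduction} says that \(c(u)/u\) is nonincreasing, which gives \(h\le b(u):=c(u)/u\). RTI says that \((v-c(u))/(1-u)\) is nonincreasing, which gives \(h\ge a(u):=(v-c(u))/(1-u)\). Hence \((h-a)(b-h)\ge 0\) almost everywhere, that is,
\begin{equation*}
h^2\le (a+b)\,h-ab .
\end{equation*}
Substituting \(h=c'\), the terms \(b\,c'=(c^2)'/(2u)\) and \(a\,c'=-((v-c)^2)'/(2(1-u))\) can be integrated by parts, turning the right-hand side into an integral of a functional of \(c\) alone. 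The boundary terms vanish because \(c(u)\le u\) and \(v-c(u)\le 1-u\).

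The main obstacle is to show that this functional of \(c\) is bounded by \(2\int c-v(1-v)\) for every admissible \(c\), meaning every \(c\) with \(0\le c\le\min(u,v)\), \(c\ge u+v-1\), \(c(0)=0\), \(c(1)=v\), and satisfying the two monotonicity constraints. My first approach is to write the difference as an integral of a manifestly nonnegative expression, presumably built from \((b-a)\), \(c-uv\) and the constraints \(u b\le v\), \((1-u)a\le v\). Failing that, I would view the difference as a functional of \(c\) on this convex constraint set and check that it is minimized at the extreme profiles \(c=uv\) and \(c=\min(u,v)\), where it vanishes. If the single inequality \((h-a)(b-h)\ge0\) turns out to be too lossy, the fallback is to use only \(h^2\le h\,b\) on \(\{u<v\}\) and \(h^2\le h\,(1-\ldots)\)-type bounds obtained from RTI on \(\{u>v\}\), splitting the \(u\)-integral at the point where the two bounds cross.

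For the equality statement, equality in \(\xi=\rho\) forces equality in every slice inequality for almost every \(v\), and therefore \(h\in\{a,b\}\) almost everywhere together with equality in the final estimate. I would show this forces \(c(u)=uv\) for all \(v\), or \(c(u)=\min(u,v)\) for all \(v\), and that mixing the two across different \(v\) is incompatible with \(C\) being a copula (monotonicity in \(v\)); this gives \(C\in\{\Pi,M\}\). The second assertion follows by applying the first to the reflected copula \(C^\ast(u,v)=u-C(u,1-v)\). This reflection turns LTI/RTD into LTD/RTI, leaves \(\xi\) unchanged, and maps \(\rho\mapsto-\rho\) and \(W\mapsto M\), giving \(\xi(C)\le-\rho(C)\) with equality if and only if \(C\in\{\Pi,W\}\).
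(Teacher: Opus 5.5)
\textbf{There is a genuine gap: the key slice inequality is left unproved.} Your slice inequality is exactly the paper's $Q_v\le 2B_v$, and your translation of LTD/RTI into $a\le h\le b$ is correct. One correction to the integration by parts: the singular endpoints vanish, but the regular ones ($u=1$ for the $b$-term, $u=0$ for the $a$-term) each contribute $v^2/2$. Using $b-a=(c-uv)/(u(1-u))$, what you actually obtain is
\[
\int_0^1 (h-v)^2\,du\;\le\;\frac12\int_0^1\frac{G(u)^2}{u^2(1-u)^2}\,du,\qquad G(u):=c(u)-uv ,
\]
so everything hinges on
\[
\frac12\int_0^1\frac{G^2}{u^2(1-u)^2}\,du\;\le\;2\int_0^1 G\,du .
\]
This is the entire difficulty, and the proposal does not prove it. The inequality is quadratic versus linear in $G$, while the tail constraints ($G\ge0$, $G/u$ non-increasing, $G/(1-u)$ non-decreasing) are invariant under $G\mapsto\lambda G$. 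So it must use the bound $c\le\min(u,v)$ in an essential way, and no rewriting based on the tail conditions alone can produce it.

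Your convexity route also does not work as stated. The difference is concave in $c$, so its minimum is attained at extreme points of your admissible set, and these are not only $uv$ and $\min(u,v)$. For example, $G=\min\{(1-v)u,\,(1-v)u_0(1-u)/(1-u_0)\}$ with $0<u_0<v$ is an extreme point. More generally, any profile whose $G$ alternates between rays through $(0,0)$ and through $(1,0)$ has $h\in\{a,b\}$ a.e. On all such profiles your first inequality is an equality, so checking the remaining one there is just the original slice inequality again.

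\textbf{The missing idea.} The paper introduces the intermediate quantity $P_v=\int_0^1(h-v)_+\,du=\int_0^1(h-v)_-\,du$ and proves the chain $Q_v\le P_v\le 2B_v$. With $g=h-v$:
\begin{itemize}
\item The first inequality is pointwise from the box bounds, $g^2\le(1-v)g_++v\,g_-$.
\item The second comes from $B_v=\int_0^1(1-u)g\,du$, which yields
\[
2B_v-P_v=\int_{\{g>0\}}\bigl(G-ug\bigr)\,du+\int_{\{g<0\}}\bigl(G+(1-u)g\bigr)\,du+\int_{\{g=0\}}G\,du\;\ge\;0 .
\]
\end{itemize}
So LTD is used only where $h>v$ and RTI only where $h<v$. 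The split that works is by the sign of $h-v$, which need not be an interval. It is not a split at a single crossing point as in your fallback, and the quadratic term is controlled by $0\le h\le1$ rather than by $a$ and $b$.

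\textbf{The equality case has the same gap.} Knowing $h\in\{a,b\}$ a.e.\ does not single out $\Pi$ and $M$, since all the zigzag profiles above satisfy it. The paper instead uses the equality cases of both inequalities in the chain. These force $h\in\{0,v,1\}$ a.e., make $\{h=1\}$ an initial interval and $\{h=0\}$ a terminal one, and exclude a middle interval with $h=v$ unless $h\equiv v$. Only $h=v$ or $h=\mathbf 1_{\{u\le v\}}$ remain. Your cross-$v$ monotonicity argument and the reflection to the LTI/RTD case agree with the paper.
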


The rest of the paper is organized as follows.
Section~\ref{sec:preliminaries} contains the copula-theoretic notation, Markov-kernel representations, and the definitions of the rank coefficients and positive-dependence conditions used throughout the paper.
Section~\ref{sec:pairwise} proves the sharp order-violation bound, a standalone inequality for two stochastically ordered conditional distributions.
Section~\ref{sec:proof} applies this bound to Markov kernels of SI copulas, proves Theorem~\ref{thm:main}, and gives sharpness and directional counterexamples, including an example showing that LTD and RTI together are still insufficient for the Kendall bound.
Finally, Section~\ref{sec:ltd-rti-rho} proves the Spearman bound under the weaker LTD and RTI assumptions and shows, by examples, that neither LTD nor RTI alone suffices for this criterion.

\section{Preliminaries}
\label{sec:preliminaries}

Let \(\lambda\) denote the Lebesgue measure on \([0,1]\).
A bivariate copula is a function \(C\colon[0,1]^2\to[0,1]\) that is grounded, \(2\)-increasing, and has uniform marginals.
More explicitly, \(C(u_1,u_2)=0\) whenever \(u_1=0\) or \(u_2=0\),
\[
    C(v_1,v_2)-C(u_1,v_2)-C(v_1,u_2)+C(u_1,u_2)\geq 0
\]
for all \(u_1\leq v_1\) and \(u_2\leq v_2\), and \(C(u_1,u_2)=u_i\) whenever \(u_j=1\) for \(j\neq i\).
Such a bivariate copula \(C\) will be identified with the probability measure
\(\mu_C\) on \([0,1]^2\) whose distribution function is \(C\).
Classical copulas include $\Pi(u,v):=uv$, $M(u,v):=\min\{u,v\}$, and $W(u,v):=\max\{u+v-1,0\}$ for \((u,v)\in[0,1]^2\), which are the independence, comonotonicity, and countermonotonicity copulas, respectively.
We write \(C^\top(u,v):=C(v,u)\) for the transpose copula.

For a set $A\subseteq\R^d$, $d\in\mathbb{N}$, let $\mathcal B(A)$ denote the Borel \(\sigma\)-algebra on \(A\).
Then, let \(K_C\) be a \emph{Markov kernel} of \(C\), by which we mean a map
\(
    K_C\colon [0,1]\times\mathcal B([0,1])\to[0,1]
\)
such that \(K_C(t,\cdot)\) is a probability measure for every \(t\in[0,1]\), \(t\mapsto K_C(t,A)\) is measurable for every \(A\in\mathcal B([0,1])\), and
\begin{equation}
\label{eq:kernel-disintegration}
    \mu_C(G)
    =
    \int_0^1 K_C(t,G_t)\de t,
    \qquad
    G\in\mathcal B([0,1]^2),
\end{equation}
where
\(
    G_t:=\{y\in[0,1]:(t,y)\in G\}.
\)
Such kernels exist and positive dependence concepts in their terms are studied in \cite{fuchs2023total,fuchs2024novel}.
Equivalently, \(K_C\) is a version of the regular conditional distribution of the second coordinate given the first coordinate, see \cite[Thm.~8.5]{kallenberg2021foundations}.
Since such kernels are unique only up to \(\lambda\)-null sets in the conditioning variable, all pointwise statements involving \(K_C(t,\cdot)\) refer to the chosen version.
 
If \(C\) and \(D\) are bivariate copulas with Markov kernels \(K_C\) and
\(K_D\), respectively, define
\[
    K_{C,D}(t,A)
    :=
    \int_0^1 K_D(x,A)\,K_C(t,\de x),
    \qquad
    A\in\mathcal B([0,1]).
\]
By the standard composition rule for probability kernels, \(K_{C,D}\) is again a probability kernel, see, e.g., \cite[Lem.~3.3]{kallenberg2021foundations}. The \emph{Markov product}
\(C*D\) is then given by
\begin{equation}
\label{eq:markov-product-kernel}
    (C*D)(u,v)
    =
    \int_0^u K_{C,D}(t,[0,v])\de t
    =
    \int_0^u\int_0^1 K_D(x,[0,v])\,K_C(t,\de x)\de t,
    \quad
    (u,v)\in[0,1]^2.
\end{equation}
The resulting copula is independent of the chosen kernel versions, \(K_{C,D}\) is a Markov kernel of \(C*D\), and \eqref{eq:markov-product-kernel} is the Markov product in the sense of \cite{darsow1992copulas}, see also \cite{durante2016principles,siburg2021stochastic}.

Next, introduce the following notation for the conditional laws of \(C\).
For \(t\in[0,1]\), write
\begin{equation}\label{eq:h-definitions}
    H_t:=K_C(t,\cdot),
    \qquad
    h_v(t):=K_C(t,[0,v]),
    \qquad 0\leq v\leq1.
\end{equation}
The map
\(
    (t,v)\mapsto h_v(t)
\)
is Borel measurable.
To see joint measurability, note that for \(v<1\),
\(
    h_v(t)
    =
    \inf_{q\in\mathbb Q\cap[0,1]}
    \left(h_q(t)+\mathbf 1_{\{q\le v\}}\right).
\)
Hence \((t,v)\mapsto h_v(t)\) is a countable infimum of Borel functions for \(v<1\), and \(h_1(t)=1\).
For fixed \(v\in[0,1]\), applying \eqref{eq:kernel-disintegration} to \(G=[0,u]\times[0,v]\) gives
\[
    C(u,v)
    =
    \int_0^u K_C(t,[0,v])\,\de t,
    \qquad 0\leq u\leq1.
\]
Hence \(u\mapsto C(u,v)\) is absolutely continuous, and the Newton--Leibniz formula for the Lebesgue integral yields
\begin{equation}\label{eq:first_derivative_identity}
    h_v(t)=K_C(t,[0,v])=\partial_1 C(t,v)
    \qquad\text{for \(\lambda\)-a.e.~}t,
\end{equation}
see \cite[Vol.~I, Thms.~5.3.6 and~5.4.2]{bogachev2007measure}.
For each fixed \(v\), the identity \(h_v=\partial_1C(\cdot,v)\) holds \(\lambda\)-a.e. Since both sides are jointly measurable in \((t,v)\), Fubini's theorem yields the identity for \(\lambda^2\)-a.e. \((t,v)\).
Since the second marginal of \(C\) is uniform, the disintegration formula from \eqref{eq:kernel-disintegration} implies that the mixture of the conditional laws \(H_t\) is \(\lambda\):
for every \(A\in\mathcal B([0,1])\),
\[
    \int_0^1 H_t(A)\de t
    =
    \mu_C([0,1]\times A)
    =
    \lambda(A).
\]
Equivalently, for every nonnegative Borel function
\(\varphi:[0,1]\to[0,\infty]\),
\begin{equation}
\label{eq:mixture-uniform-functions}
    \int_0^1\int_0^1 \varphi(y)\,H_t(\de y)\de t
    =
    \int_0^1 \varphi(y)\de y,
\end{equation}
and in particular, choosing \(\varphi=\mathbf 1_{[0,v]}\) for \(v\in[0,1]\) gives
\begin{equation}
\label{eq:h-marginal}
    \int_0^1 h_v(t)\de t=v,
    \qquad 0\leq v\leq1.
\end{equation}
For probability measures \(\mu,\nu\) on \([0,1]\), write
\(\mu\preceq_{\rm st}\nu\) if
\(
    F_\mu(y)\geq F_\nu(y)
    \text{ for all }y\in[0,1],
\)
where \(F_\mu,F_\nu\) are the corresponding distribution functions.
The stochastic-increasing condition used below is the copula-kernel formulation of positive regression dependence, see \cite{lehmann1966dependence}.
For the copula-theoretic formulations of SI, LTD, and RTI used here, we follow \cite[Ch.~5]{nelsen2006introduction}.
We call a copula \(C\) \emph{stochastically increasing} (\emph{SI}) if its Markov kernel can be chosen such that, for every \(v\in(0,1)\), the map
\begin{equation}\label{eq:si_definition}
    t\mapsto K_C(t,[0,v])
\end{equation}
is non-increasing.
Similarly, \(C\) is called \emph{stochastically decreasing} (\emph{SD}) if its Markov kernel can be chosen such that, for every \(v\in(0,1)\), the map
\(
    t\mapsto K_C(t,[0,v])
\)
is non-decreasing.
Throughout, whenever \(C\) is SI or SD, we fix such a monotone
version of \(K_C\).

We also use the following classical tail-monotonicity notions.
\(C\) is \emph{left-tail decreasing} (\emph{LTD}) if, for every \(v\in(0,1)\),
\begin{equation}\label{eq:ltd_definition}
    u\longmapsto \frac{C(u,v)}{u},
    \qquad u\in(0,1],
\end{equation}
is non-increasing.
Further, \(C\) is \emph{right-tail increasing} (\emph{RTI}) if, for every
\(v\in(0,1)\),
\begin{equation}\label{eq:rti_definition}
    u\longmapsto \frac{v-C(u,v)}{1-u},
    \qquad u\in[0,1),
\end{equation}
is non-increasing, cf.~\cite[Thm.~5.2.5]{nelsen2006introduction}.
It is classical that a stochastically increasing copula is necessarily both LTD and RTI, see \cite[Thm.~5.2.12]{nelsen2006introduction}, so the assumption in Theorem~\ref{thm:main} is stronger than the assumption in Theorem~\ref{thm:ltd-rti-xi-rho}.
Similarly to \eqref{eq:ltd_definition} and \eqref{eq:rti_definition}, one may also define the opposite tail monotonicity conditions.
\(C\) is \emph{left-tail increasing} (\emph{LTI}) if, for every \(v\in(0,1)\),
\(
    u\mapsto \frac{C(u,v)}{u},
\)
is non-decreasing for \(u\in(0,1]\), and \(C\) is \emph{right-tail decreasing} (\emph{RTD}) if, for every
\(v\in(0,1)\),
\(
    u\mapsto \frac{v-C(u,v)}{1-u},
\)
is non-decreasing for \(u\in[0,1)\).

We now recall the three rank coefficients considered in this paper.
Spearman's rho, as introduced in \cite{spearman1904proof}, is a classical measure of concordance.
It quantifies monotone positive or negative dependence and takes values in \([-1,1]\), with the extremal values attained at the Fréchet bounds \(M\) and \(W\).
If \((X,Y)\) has continuous marginal distribution functions, then Spearman's rho is the Pearson correlation of the random vector \((F(X),G(Y))\), where \(F\) and \(G\) are the marginal distribution functions of \(X\) and \(Y\), respectively.
If $(X,Y)$ has copula \(C\), then Spearman's rho can be expressed in terms of \(C\) as
\begin{equation}
\label{eq:rho-definition}
\rho(C)
:=
12\int_0^1\int_0^1 C(u,v)\de u\de v-3
=
12\int_0^1\int_0^1 \bigl(C(u,v)-uv\bigr)\de u\de v,
\end{equation}
see, e.g., \cite[Thm.~5.1.6]{nelsen2006introduction}.

Alongside Spearman's rho, Kendall's tau is one of the most prominent rank correlation coefficients, which has a strikingly simple probabilistic meaning.
We say that two pairs of observations \((X_1,Y_1)\) and \((X_2,Y_2)\) are \emph{concordant} if the ranks of both coordinates agree, i.e., if either \(X_1 < X_2\) and \(Y_1 < Y_2\) or \(X_1 > X_2\) and \(Y_1 > Y_2\).
In contrast, they are \emph{discordant} if the ranks disagree, i.e., if either \(X_1 < X_2\) and \(Y_1 > Y_2\) or \(X_1 > X_2\) and \(Y_1 < Y_2\).
Let $(X,Y)$ and $(X',Y')$ have the same distribution and be independent.
Then Kendall's tau is defined as the difference between the probabilities of concordance and discordance, see \cite{kendall1938new} or also \cite[Ch.~5]{nelsen2006introduction}:
\[
    \tau(X,Y)
    :=\Prob\left[(X - X')(Y - Y') > 0\right] - \Prob\left[(X - X')(Y - Y') < 0\right].
\]
Let \(C\) be a bivariate copula with conditional laws \((H_t)_{t\in[0,1]}\) as above.
For \(t,s\in[0,1]\), define
\begin{equation}\label{eq:local_discordance}
    D(t,s)
    :=
    (H_t\otimes H_s)\{(x,y)\in[0,1]^2:x>y\},
\end{equation}
where \(H_t\otimes H_s\) is the product measure of \(H_t\) and \(H_s\).
Equivalently, \(D(t,s)=\Prob(Y_t>Y_s)\) if \(Y_t\sim H_t\) and \(Y_s\sim H_s\) are independent, so $D(t,s)$ is the pairwise discordance probability of the conditional laws \(H_t\) and \(H_s\).
If \((U,V)\) and \((U',V')\) are independent with copula \(C\), then the
two discordant events
\[
    \{U<U',\,V>V'\}
    \qquad\text{and}\qquad
    \{U>U',\,V<V'\}
\]
have the same probability.
Further, by disintegrating both independent copies, the joint law of
\((U,U',V,V')\) is
\(
    \de t\,\de s\,H_t(\de x)\,H_s(\de y).
\)
Therefore,
\(
    \Prob(U<U',\,V>V')
    =
    \int_{0<t<s<1} D(t,s)\de t\de s
\)
by the definition of \(D\).
Since the marginals are continuous, ties have probability zero, and thus
\begin{equation}
\label{eq:tau-d-relation}
    \tau(C)
    =
    1-2\Prob((U-U')(V-V')<0)
    =
    1-4\int_{0<t<s<1}D(t,s)\de t\de s.
\end{equation}

Chatterjee's rank correlation measures a different aspect of dependence. It is directional and quantifies the degree to which the second coordinate is determined by the first one, rather than the strength of monotone association.
\(\xi\) takes values in \([0,1]\), with value \(0\) corresponding to independence and value \(1\) corresponding to perfect functional dependence of the second coordinate on the first.
In particular, unlike \(\rho\) and \(\tau\), it does not distinguish positive from negative monotone dependence and also detects non-monotone functional relationships.
For a copula \(C\), Chatterjee's rank correlation is given by
\begin{equation}
\label{eq:xi-definition}
\xi(C)
\coloneq
6 \int_0^1\int_0^1 \left(\partial_1 C(u,v)\right)^2 \de u\de v - 2 
=
1-6\int_0^1\int_0^1 h_v(t)(1-h_v(t))\de t\de v,
\end{equation}
see \cite{chatterjee2020,dette2013copula}, where the second equality follows immediately from \eqref{eq:h-marginal}.

\section{Order violations under stochastic order}
\label{sec:pairwise}

We now prove the local inequality underlying the Kendall bound.
The result is a sharp order-violation bound for two stochastically ordered distribution functions.
It estimates the order-violation probability \(\Prob(X>Z)\) by a symmetric dispersion functional of the two distribution functions.
When applied with \(X\sim H_t\) and \(Z\sim H_s\), this probability becomes the local discordance term \(D(t,s)\) in the representation \eqref{eq:tau-d-relation}.
Thus Theorem~\ref{thm:pairwise} provides the pointwise estimate whose integrated version we shall use to prove the Kendall bound in Theorem~\ref{thm:main}.

\begin{theorem}[Sharp order-violation bound under stochastic order]
\label{thm:pairwise}
Let \(F\) and \(G\) be continuous distribution functions on \([0,1]\), satisfying
\(
    F(0)=G(0)=0
\)
and
\(
    F(1)=G(1)=1,
\)
let \(F\) be strictly increasing, and assume
\[
    F(y)\geq G(y)
    \qquad
    \text{for all }y\in[0,1].
\]
Let \(X\sim F\) and \(Z\sim G\) be independent.
Then
\begin{equation}
\label{eq:pairwise-bound}
    \Prob(X>Z)
    \leq
    \frac32
    \left\{
        \int_0^1 F(y)(1-F(y))\de G(y)
        +
        \int_0^1 G(y)(1-G(y))\de F(y)
    \right\}.
\end{equation}
\end{theorem}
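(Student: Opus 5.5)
The plan is to reduce \eqref{eq:pairwise-bound} to an inequality for a single distribution function dominated by the identity, and then to a one-dimensional integral inequality for its quantile function.

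\emph{Step 1 (normalise by $F$).} Since $F$ is continuous and strictly increasing with $F(0)=0$ and $F(1)=1$, $U:=F(X)$ is uniform on $[0,1]$. Set $W:=F(Z)$ and $g:=G\circ F^{-1}$. Then $g$ is the distribution function of $W$ on $[0,1]$, and $F\geq G$ gives
\[
g(u)\leq u, \qquad u\in[0,1].
\]
Strict monotonicity of $F$ gives $\{X>Z\}=\{U>W\}$. Since $U$ is uniform and independent of $W$,
\[
\Prob(X>Z)=\Prob(U>W)=\int_0^1 g(u)\de u .
\]
The same substitution $u=F(y)$ turns the right-hand side of \eqref{eq:pairwise-bound} into
\[
\tfrac32\Bigl\{\int_0^1 u(1-u)\de g(u)+\int_0^1 g(u)(1-g(u))\de u\Bigr\}.
\]
Integrating by parts in the first integral (the boundary terms vanish because $u(1-u)=0$ at $u=0,1$) gives $\int_0^1 g(u)(2u-1)\de u$. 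Hence \eqref{eq:pairwise-bound} is equivalent to
\[
\Phi(g):=\int_0^1 g(u)\Bigl(3u-1-\tfrac32 g(u)\Bigr)\de u\;\geq\;0 .
\]

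\emph{Step 2 (pass to quantiles).} Let $q$ be the left-continuous quantile function of $g$. Then $q$ is nondecreasing and satisfies $p\leq q(p)\leq 1$; the lower bound is the constraint $g(u)\leq u$. Use the layer-cake identity $g(u)=\int_0^1\mathbf 1_{\{u>q(p)\}}\de p$.

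For the linear part,
\[
\int_0^1 g(u)(3u-1)\de u=\int_0^1 (1-q)\bigl(\tfrac12+\tfrac32 q\bigr)\de p .
\]
For the quadratic part, $g(u)^2$ is the measure of pairs $(p,p')$ with $u>\max\{q(p),q(p')\}$. Because $q$ is monotone, $\max\{q(p),q(p')\}=q(\max\{p,p'\})$, and $\max\{p,p'\}$ has density $2p$. Hence
\[
\int_0^1 g(u)^2\de u=\int_0^1 2p\,(1-q(p))\de p .
\]
Writing $q(p)=p+r(p)$ with $0\leq r(p)\leq 1-p$, and using $\int_0^1(1-p)(\tfrac12-\tfrac32p)\de p=0$, a short computation gives
\[
\Phi(g)=\int_0^1 r(p)\Bigl(1-\tfrac32 r(p)\Bigr)\de p .
\]

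\emph{Step 3 (the main obstacle).} The integrand in Step 2 is negative wherever $r>2/3$, so the inequality cannot be proved pointwise. The monotonicity of $q=p+r$ has to supply the compensation. For example, $q\equiv1$ gives $\Phi=0$ although $r>2/3$ on $[0,1/3)$.

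My first attempt would be an extreme-point argument:

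\begin{itemize}
\item $\Phi$ is concave in $r$.
\item The admissible set $\{q \text{ nondecreasing},\ p\leq q\leq 1\}$ is convex.
\item Hence it suffices to check the extreme points. I expect these to be of ordinal-sum form: $q(p)=p$ off a disjoint family of intervals $[a_i,b_i)$, and $q\equiv b_i$ on $[a_i,b_i)$.
\end{itemize}

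For such $q$ each block contributes
\[
\int_0^{L}s\Bigl(1-\tfrac32 s\Bigr)\de s=\tfrac12L^2(1-L)\;\geq\;0,\qquad L=b_i-a_i\leq1 .
\]
This gives $\Phi\geq0$, with equality exactly when every block has length $0$ or $1$. This matches the announced sharpness for ordinal sums of $\Pi$.

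Making the extreme-point step rigorous is the delicate part; a Choquet/Krein--Milman argument in $L^1$ with lower semicontinuity of $-\Phi$ is one route. As a fallback I would argue directly. Given $q$, replace it on each maximal interval where $r>0$ by the flat-block function $\tilde q$ (constant equal to the right endpoint, which dominates $q$). Then show $\Phi(q)\geq\Phi(\tilde q)$ via a rearrangement inequality: on such an interval $[a,b)$, the function $r$ is squeezed between $0$ and $b-p$, and concavity together with the fixed endpoint should give the comparison.
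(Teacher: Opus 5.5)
Your Steps~1 and~2 are correct. Your strategy for Step~3 (concavity, staircases pinned to the diagonal, blockwise evaluation) is the one the paper uses; your block value \(\tfrac12L^2(1-L)\) is \(-\tfrac32\) times the paper's \(\tfrac{L^2}{3}(L-1)\). The quantile passage is a detour. Substituting \(g(u)=u-\delta(u)\) directly into \(\Phi\) gives \(\Phi(g)=\int_0^1\delta\de u-\tfrac32\int_0^1\delta^2\de u\), which is exactly Lemma~\ref{lem:monotone-eta} with \(\eta=g\). Your Step~3 inequality is equivalent to it via \(q(p)=1-\eta(1-p)\), \(r(p)=\delta(1-p)\).

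The genuine gap is that Step~3, which carries the whole content of the theorem, is not proved. Compactness of the admissible set in \(L^1\) (Helly) and continuity of \(\Phi\) are harmless. The decisive claim, that every extreme point is an ordinal-sum staircase, is only ``expected''. In this infinite-dimensional set of monotone functions that is not a formality. You would have to exclude strictly increasing pieces above the diagonal and singular continuous pieces. You would also have to handle components of \(\{q>\mathrm{id}\}\) at whose endpoints \(q(p)-p\) degenerates; there, global perturbations such as \(q\pm\theta(b-q)\) leave the admissible set and localized ones are needed.

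Your fallback does not close the gap, because the comparison \(\Phi(q)\ge\Phi(\tilde q)\) is false. The map \(s\mapsto s(1-\tfrac32s)\) is not monotone, so \(r\le\tilde r\) yields nothing. Take \(q(p)=p+\varepsilon(b-p)\) on \([0,b]\) and \(q(p)=p\) on \((b,1]\), with \(0<\varepsilon<1\). Then \([0,b)\) is the maximal interval where \(r>0\), \(\tilde q\equiv b\) there, and
\[
    \Phi(q)=\frac{\varepsilon b^2}{2}-\frac{\varepsilon^2 b^3}{2}
    <\frac{b^2}{2}(1-b)=\Phi(\tilde q)
\]
for small \(\varepsilon\) (e.g.\ \(b=\tfrac12\), \(\varepsilon=\tfrac14\) gives \(\tfrac{7}{256}<\tfrac1{16}\)). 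Concavity helps only through mixtures: here \(q=(1-\varepsilon)\,\mathrm{id}+\varepsilon\,\tilde q\), whence \(\Phi(q)\ge\varepsilon\,\Phi(\tilde q)\).

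The paper sidesteps the infinite-dimensional extreme-point problem by discretizing, and this works verbatim in your variables:
\begin{enumerate}
\item The right-endpoint step functions \(q_k(p)=q(i/k)\) on \(((i-1)/k,i/k]\) stay admissible and converge to \(q\) a.e. By dominated convergence it therefore suffices to treat grid step functions.
\item These form the polytope \(\{x\in\R^n: x_1\le\cdots\le x_n\le1,\ x_i\ge i/n\}\), on which the concave \(\Phi\) attains its minimum at a vertex.
\item At a vertex, each maximal constant block \(\{m,\dots,l\}\) must equal its lower bound \(l/n\); otherwise shifting the block by \(\pm\varepsilon\) stays feasible.
\end{enumerate}
Your block computation then gives \(\Phi\ge0\).
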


The bound is independent of the copula setting.
If \(F\geq G\), then \(X\sim F\) is stochastically no larger than \(Z\sim G\), and the event \(\{X>Z\}\) is the pairwise order violation of this stochastic ordering.
Thus \eqref{eq:pairwise-bound} gives a distribution-free control of a natural misranking probability under stochastic dominance, in terms of the symmetrized cross-rank variance functional
\[
    \E[F(Z)(1-F(Z))]
    +
    \E[G(X)(1-G(X))].
\]
The terminology is literal: if \(Y\sim F\) is independent of \(Z\) and \(W\sim G\) is independent of \(X\), then
\[
    F(Z)(1-F(Z))
    =
    \operatorname{Var}\!\left(\mathbf 1_{\{Y\leq Z\}}\mid Z\right),
    \qquad
    G(X)(1-G(X))
    =
    \operatorname{Var}\!\left(\mathbf 1_{\{W\leq X\}}\mid X\right).
\]
Thus the right-hand side averages the Bernoulli variances of the two cross-rank comparisons, namely the rank of a \(G\)-sample on the \(F\)-scale and the rank of an \(F\)-sample on the \(G\)-scale.
The factor \(3/2\) is optimal, since equality holds, e.g., when \(F=G\).
Below we use this standalone inequality with \(F=h_\cdot(t)\) and \(G=h_\cdot(s)\), but no copula structure is used in its proof.

For the proof, we first isolate a simple deterministic inequality for monotone maps below the identity in the following lemma.

\begin{lemma}
\label{lem:monotone-eta}
Let \(\eta\colon[0,1]\to[0,1]\) be non-decreasing and satisfy
\(
    \eta(t)\leq t
    \text{ for all }t\in[0,1].
\)
Then
\begin{equation}
\label{eq:eta-ineq}
    \int_0^1 (t-\eta(t))^2\de t
    \leq
    \frac23\int_0^1 (t-\eta(t))\de t.
\end{equation}
\end{lemma}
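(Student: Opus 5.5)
The plan is to pass to the gap function \(g(t):=t-\eta(t)\in[0,1]\) and prove \(\int_0^1 g^2\de t\leq\frac23\int_0^1 g\de t\) through its distribution function. Two properties of \(g\) carry the argument. First, \(g(t)\leq t\), since \(\eta\geq0\). Second, \(g\) can increase at most at unit speed: for \(s<t\), monotonicity of \(\eta\) gives \(g(t)-g(s)=(t-s)-(\eta(t)-\eta(s))\leq t-s\). Let \(m(a):=\lambda\{g>a\}\) for \(a\in[0,1]\) and \(a^*:=\sup g\). By the layer-cake formula,
\[
    \int_0^1 g^2\de t=\int_0^1 2a\,m(a)\de a,
    \qquad
    \int_0^1 g\de t=\int_0^1 m(a)\de a .
\]
The claim is therefore equivalent to \(\int_0^{a^*}\bigl(a-\tfrac13\bigr)m(a)\de a\leq 0\).

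The first key step extracts a structural constraint on \(m\) from the unit-speed property. Fix \(a<b<a^*\) and put \(t_0:=\inf\{g>b\}\). Pick \(t\) slightly larger than \(t_0\) with \(g(t)>b\). Then every \(s\in(t-(g(t)-a),t]\) satisfies \(g(s)\geq g(t)-(t-s)>a\). The part of this interval lying to the left of \(t_0\) has \(g\leq b\) there, and its length is at least \(b-a\) up to an arbitrarily small error. Hence \(m(a)\geq m(b)+(b-a)\). Equivalently, \(\varphi(a):=m(a)+a\) is non-increasing on \([0,a^*)\). Moreover \(\varphi\leq1\), because \(m(a)\leq\lambda\{t>a\}=1-a\) by \(g(t)\leq t\). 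Also \(\varphi\geq a^*\), since \(m(a)\to m(a^*{-})\geq0\) and monotonicity propagates this bound down to every \(a<a^*\). I expect this step to be the main obstacle: the level-set bookkeeping near \(t_0\) and the handling of the endpoints must be made rigorous, for instance by working with \(\limsup\) and with \(\varepsilon\)-approximations.

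Having this, it suffices to show
\[
    \Phi(\varphi):=\int_0^{a^*}\bigl(a-\tfrac13\bigr)\bigl(\varphi(a)-a\bigr)\de a\leq0
\]
for every non-increasing \(\varphi\) on \([0,a^*)\) with \(a^*\leq\varphi\leq1\). The functional \(\Phi\) is affine in \(\varphi\). Any such \(\varphi\) can be written as \(\varphi=a^*+(1-a^*)\int_0^1\mathbf 1_{\{\varphi>a^*+(1-a^*)r\}}\de r\), and each indicator \(\mathbf 1_{\{\varphi>\cdot\}}\) is of the form \(\mathbf 1_{[0,c)}\) because \(\varphi\) is non-increasing. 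So \(\varphi\) is a mixture of the step functions \(\varphi_c:=\mathbf 1_{[0,c)}+a^*\mathbf 1_{[c,a^*)}\) with \(0\leq c\leq a^*\). It therefore suffices to check, for \(0\leq c\leq a^*\leq1\),
\[
    \int_0^c\bigl(a-\tfrac13\bigr)(1-a)\de a+\int_c^{a^*}\bigl(a-\tfrac13\bigr)(a^*-a)\de a\leq 0 .
\]
This is an explicit cubic in \((c,a^*)\). I will verify it by elementary calculus: check it on the boundary \(c=a^*\) and at \(a^*=1\), and use that the derivative in \(c\) equals \((c-\frac13)(1-a^*)\).

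As a sanity check, \(c=a^*=1\) gives \(\int_0^1(a-\frac13)(1-a)\de a=0\). This is the equality case \(\eta\equiv0\), which corresponds to \(F=G\) and matches the stated optimality of the constant.
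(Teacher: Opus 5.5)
Your argument is correct and takes a genuinely different route from the paper. The paper works with \(\eta\) directly. It restricts to grid step functions, which form a polytope \(P_n\) on which \(Q(\eta)=\int\{(t-\eta)^2-\frac23(t-\eta)\}\de t\) is convex, so the maximum sits at a vertex. The vertices are pinned staircases with \(\eta\equiv a\) on each block \([a,b)\), and each block contributes \(\frac{(b-a)^2}{3}((b-a)-1)\le 0\). A left-endpoint approximation with dominated convergence then handles general \(\eta\).

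You instead linearize the problem. The layer-cake formula makes both sides linear in the level-set function \(m\) of the gap \(g=t-\eta\). Monotonicity of \(\eta\) enters only through the unit-speed property, which you convert into monotonicity of \(m(a)+a\). After enlarging to all non-increasing \(\varphi\) with \(a^*\le\varphi\le1\), the affine functional becomes a mixture over two-valued steps, and everything reduces to a two-parameter cubic.

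Your route needs no discretization, no vertex argument and no limiting step. It also yields the equality cases. Write \(f(c,a^*)\) for the left-hand side of your final display. Then \(f<0\) whenever \(0<a^*<1\), so equality forces \(a^*\in\{0,1\}\), i.e. \(\eta=\mathrm{id}\) or \(\eta=0\) on \([0,1)\). The paper's route, in exchange, exhibits the extremal configurations as explicit staircases and reduces the final computation to a one-line blockwise integral.

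A few small corrections. The step you flag as the main obstacle is already complete as written. Indeed \((t-g(t)+a,t]\subseteq[0,1]\) because \(g(t)\le t\), and letting \(t\downarrow t_0\) gives \(\lambda\{a<g\le b\}\ge b-a\).

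Your plan for the final check, however, is incomplete as stated. Since \(\partial_c f=(c-\tfrac13)(1-a^*)\) changes sign at \(c=\tfrac13\), for fixed \(a^*<1\) the maximum over \(c\in[0,a^*]\) is at \(c=0\) or \(c=a^*\); for \(a^*<\tfrac13\) only \(c=0\) matters. So besides \(f(a^*,a^*)=-\tfrac{a^*(1-a^*)^2}{3}\le0\) you must also verify \(f(0,a^*)=-\tfrac{(a^*)^2(1-a^*)}{6}\le0\). The case \(a^*=1\), where \(f\equiv0\), does not cover this.

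Finally, in your sanity check, \(\eta\equiv0\) does not correspond to \(F=G\). The case \(F=G\) gives \(\eta=\mathrm{id}\), i.e. \(g\equiv0\), where the lemma reads \(0=0\). The case \(\eta\equiv0\) is the limiting fully separated case. Both give equality in the pairwise bound.
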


The assumptions in Lemma~\ref{lem:monotone-eta} are tailored to the stochastic order between the conditional laws that appear in the SI application in Section~\ref{sec:proof}.
There, for \(t<s\), we take \(F(y):=h_y(t)\) and \(G(y):=h_y(s)\).
Since the copula is stochastically increasing, the chosen kernel satisfies \(h_y(t)\geq h_y(s)\) for all \(y\in[0,1]\), and hence \(F\geq G\).
The quantile change of variables \(\eta(r):=G(F^{-1}(r))\), \(0\leq r\leq1\), then produces a non-decreasing map satisfying \(\eta(r)=G(F^{-1}(r))\leq F(F^{-1}(r))=r\).
Thus \(r-\eta(r)\) measures, in the \(F\)-quantile scale, the separation between the two conditional distribution functions.

\begin{figure}[t!]
  \centering
  \includegraphics[width=0.95\textwidth]{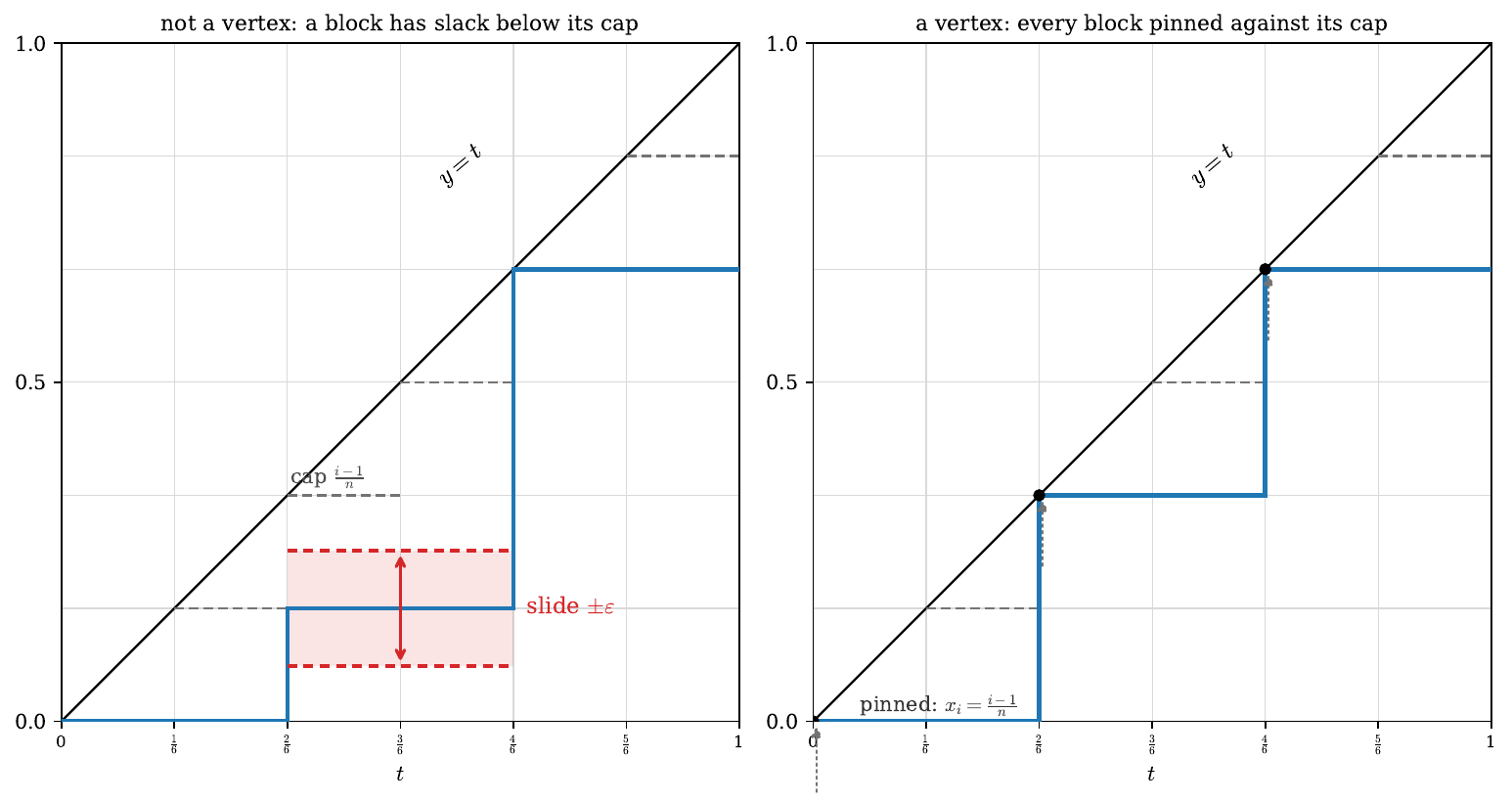}
  \caption{The vertices of the feasible polytope in the proof of Lemma~\ref{lem:monotone-eta}, illustrated for \(n=6\).
  Feasible grid step functions \(\eta\equiv x_i\) on \([(i-1)/n,i/n)\) (blue) obey the monotonicity constraints \(0\leq x_1\leq\cdots\leq x_n\) and the caps \(x_i\leq(i-1)/n\) in \eqref{eq:eta-proof-grid-constraints}.
  The dashed ceiling marks the caps, which coincide with the diagonal \(y=t\) at the left edge of each cell.
  Left: if a maximal constant block lies strictly below its cap, the whole block can be shifted up or down by a small \(\varepsilon\) (red) while remaining feasible, so \(\eta\) is the midpoint of two distinct feasible points and hence not a vertex.
  Right: at a vertex every maximal block is pinned against its cap, that is, its value equals the left endpoint \((p-1)/n\) (black dots on the diagonal), so it cannot be raised and no two-sided perturbation exists.
  The convex functional \(Q\) of \eqref{eq:eta-proof-Q-def} therefore attains its maximum at such a pinned staircase, for which \eqref{eq:eta-ineq} is verified blockwise.}
  \label{fig:monotone-eta}
\end{figure}

\begin{proof}[Proof of Lemma~\ref{lem:monotone-eta}]
Put
\begin{equation}
\label{eq:eta-proof-Q-def}
    Q(\eta)
    :=
    \int_0^1\left\{(t-\eta(t))^2-\frac23(t-\eta(t))\right\}\de t,
\end{equation}
so we must prove \(Q(\eta)\le0\).
We first consider step functions on the grid \(0,1/n,\ldots,1\), say \(\eta(t)=x_i\) on \([(i-1)/n,i/n)\), under the constraints
\begin{equation}
\label{eq:eta-proof-grid-constraints}
    0\le x_1\le\cdots\le x_n,
    \qquad
    x_i\le \frac{i-1}{n}.
\end{equation}
Let
\[
    P_n
    :=
    \left\{
        x\in\R^n:
        0\leq x_1\leq\cdots\leq x_n,
        \quad
        x_i\leq \frac{i-1}{n},\ i=1,\ldots,n
    \right\}
\]
be the feasible set in \eqref{eq:eta-proof-grid-constraints}.
Each constraint defining \(P_n\) is a linear inequality in the variable \(x=(x_1,\ldots,x_n)\): for instance, \(x_i\leq x_{i+1}\) is the halfspace condition \(x_i-x_{i+1}\leq0\).
Hence \(P_n\) is a finite intersection of closed halfspaces. Since also
\[
    0\leq x_i\leq x_n\leq\frac{n-1}{n}
    \qquad(i=1,\ldots,n),
\]
the set \(P_n\) is bounded. Thus \(P_n\) is an \(H\)-polytope in the sense of \cite[Def.~0.1]{ziegler1995polytopes}, and in particular it is compact.
By \cite[Def.~2.1 and Prop.~2.2 (i)]{ziegler1995polytopes}, its vertices are the zero-dimensional faces of \(P_n\), and
\(
    P_n=\operatorname{conv}(\operatorname{vert}(P_n)),
\)
i.e.~$P_n$ is the convex hull of its vertices.
Equivalently for the argument below, a vertex is an extreme point: it cannot be written as the midpoint of two distinct points of \(P_n\).
Since \(Q\) is convex in \((x_1,\ldots,x_n)\), its maximum over \(P_n\) is attained at a vertex. The structure of these vertices is illustrated in Figure~\ref{fig:monotone-eta}.

Let \(x\) be a vertex, and let \(\{p,p+1,\ldots,q\}\) be a maximal block on which the coordinates of \(x\) are constant, say
\begin{equation}
\label{eq:eta-proof-constant-block}
    x_p=x_{p+1}=\cdots=x_q=c.
\end{equation}
We claim that \(c=(p-1)/n\).
If \(p=1\), then
\(
    0\leq x_1\leq 0,
\)
so \(c=0=(p-1)/n\).
Suppose \(p>1\).
By maximality of the block in \eqref{eq:eta-proof-constant-block}, \(x_{p-1}<c\), and, if \(q<n\), also \(c<x_{q+1}\).
If \(c<(p-1)/n\), then we can choose \(\varepsilon>0\) so small that replacing all coordinates in this block by \(c+\varepsilon\), or by \(c-\varepsilon\), preserves all inequalities in \eqref{eq:eta-proof-grid-constraints}.
Indeed, the upper constraints are preserved because
\[
    c+\varepsilon\leq \frac{p-1}{n}\leq \frac{i-1}{n},
    \qquad i=p,\ldots,q,
\]
and the monotonicity constraints are preserved by the choice of \(\varepsilon\).
The two modified vectors are distinct feasible points whose midpoint is \(x\), contradicting that \(x\) is a vertex.
Hence \(c=(p-1)/n\).
Thus every vertex is obtained by partitioning \([0,1]\) into grid intervals \([a,b)\) on which \(\eta(t)=a\).
The contribution of such a block is
\begin{equation}
\label{eq:eta-proof-block-contribution}
    \int_a^b\left\{(t-a)^2-\frac23(t-a)\right\}\de t
    =
    \frac{(b-a)^2}{3}\bigl((b-a)-1\bigr)
    \le0.
\end{equation}
Summing \eqref{eq:eta-proof-block-contribution} over the blocks gives \(Q(\eta)\le0\) for all feasible grid step functions.

For a general non-decreasing \(\eta\le\mathrm{id}\), approximate from the left by the grid step functions
\(
    \eta_k(t)=\eta\left(\frac{i-1}{k}\right)
\)
on
\(
    t\in\left[\frac{i-1}{k},\frac{i}{k}\right).
\)
Then \(\eta_k(t)\to\eta(t)\) at every continuity point of \(\eta\), hence almost everywhere, and dominated convergence applied to \eqref{eq:eta-proof-Q-def} yields \(Q(\eta)=\lim_{k\to\infty} Q(\eta_k)\le0\).
\end{proof}


\begin{proof}[Proof of Theorem~\ref{thm:pairwise}]
Since \(F\) is continuous and strictly increasing with \(F(0)=0\) and \(F(1)=1\), it has a continuous inverse \(F^{-1}\colon[0,1]\to[0,1]\).
Define \(\eta(t):=G(F^{-1}(t))\) for \(0\leq t\leq1\).
Then \(\eta\) is non-decreasing and, since \(F\geq G\), it satisfies \(\eta(t)=G(F^{-1}(t))\leq F(F^{-1}(t))=t\), so \(\eta\) fulfills the hypotheses of Lemma~\ref{lem:monotone-eta}.

Set
\(
    U:=F(X).
\)
By the regularity assumptions, \(U\sim{\rm Uniform}(0,1)\) and \(X=F^{-1}(U)\).
Since \(X\) and \(Z\) are independent, \(U\) and \(F(Z)\) are independent.
For \(0\leq t\leq1\),
\[
    \Prob(F(Z)\leq t)=\Prob(Z\leq F^{-1}(t))=G(F^{-1}(t))=\eta(t),
\]
so \(F(Z)\) has distribution function \(\eta\).
Further,
\(
    \Prob(X>Z)=\Prob(F(X)>F(Z)),
\)
since \(F\) is strictly increasing.
With \(U:=F(X)\), we have \(U\sim{\rm Uniform}(0,1)\), and \(U\) is independent
of \(F(Z)\). Therefore, conditioning on \(F(Z)\),
\[
    \Prob(F(X)>F(Z))
    =
    \E[\Prob(U>F(Z)\mid F(Z))]
    =
    \E[1-F(Z)].
\] 
Since \(F(Z)\) has distribution function \(\eta\), it follows
\(
    \E[1-F(Z)]
    =
    \int_0^1 \Prob(F(Z)\leq t)\de t
    =
    \int_0^1 \eta(t)\de t,
\)
and hence
\begin{equation}
\label{eq:PXZ-eta}
    \Prob(X>Z)=\int_0^1 \eta(t)\de t.
\end{equation}
Next, since $Z\sim G$, it is
\begin{equation}\label{eq:dispersion-interpretation}
    \int_0^1 F(y)(1-F(y))\de G(y)
    =
    \E[F(Z)(1-F(Z))].
\end{equation}
Further, since \(F(Z)\) has distribution function \(\eta\), it is
\begin{equation}\label{eq:eta-distribution}
    \E[F(Z)(1-F(Z))]
    = \int_0^1 t(1-t)\,\mu_\eta(dt)
    =
    \int_0^1 t(1-t)\de \eta(t),
\end{equation}
where $\mu_\eta$ is the probability measure on $[0,1]$ with distribution function $\eta$.
The function \(\eta\) is continuous and of bounded variation, while \(\phi(t)=t(1-t)\) is continuously differentiable and satisfies \(\phi(0)=\phi(1)=0\).
Therefore Stieltjes integration by parts as in \cite[Thm.~18.4]{billingsley2012probability} gives
\[
    \int_0^1 t(1-t)\de \eta(t)
    =
    -\int_0^1 \eta(t)(1-2t)\de t
    =
    \int_0^1 \eta(t)(2t-1)\de t.
\]
Combining this with \eqref{eq:dispersion-interpretation} and \eqref{eq:eta-distribution} yields
\begin{equation}
\label{eq:first-dispersion-eta}
    \int_0^1 F(y)(1-F(y))\de G(y)
    =
    \int_0^1 \eta(t)(2t-1)\de t.
\end{equation}
Similarly,
\(
    \int_0^1 G(y)(1-G(y))\de F(y)
    =
    \E[G(X)(1-G(X))].
\)
Since \(X=F^{-1}(U)\) almost surely, we have \(G(X)=G(F^{-1}(U))=\eta(U)\) almost surely.
Further, since \(U\sim\mathrm{Uniform}(0,1)\), it follows that
\begin{equation}
\label{eq:second-dispersion-eta}
    \int_0^1 G(y)(1-G(y))\de F(y)
    =
    \E[\eta(U)(1-\eta(U))]
    =
    \int_0^1 \eta(t)(1-\eta(t))\de t.
\end{equation}
Adding \eqref{eq:first-dispersion-eta} and \eqref{eq:second-dispersion-eta}, we get
\[
    \int_0^1 F(y)(1-F(y))\de G(y)
    +
    \int_0^1 G(y)(1-G(y))\de F(y)
    =
    \int_0^1 \eta(t)(2t-\eta(t))\de t.
\]
Recalling also \eqref{eq:PXZ-eta}, the desired inequality \eqref{eq:pairwise-bound} is therefore equivalent to
\begin{equation}
\label{eq:eta-inequality}
    \int_0^1 \eta(t)\de t
    \leq
    \frac32
    \int_0^1 \eta(t)(2t-\eta(t))\de t.
\end{equation}
Put
\(
    \delta(t):=t-\eta(t).
\)
Then \(\delta(t)\geq0\), and
\(
    \int_0^1 \eta(t)\de t
    =
    \frac12-\int_0^1\delta(t)\de t,
\)
while
\[
    \int_0^1 \eta(t)(2t-\eta(t))\de t
    =
    \int_0^1 (t-\delta(t))(t+\delta(t))\de t
    =
    \frac13-\int_0^1\delta(t)^2\de t.
\]
The desired inequality \eqref{eq:eta-inequality} is therefore equivalent to
\(
    \int_0^1 \delta(t)^2\de t
    \leq
    \frac23\int_0^1\delta(t)\de t,
\)
which is precisely Lemma~\ref{lem:monotone-eta}.
\end{proof}

\section{From order violations to the Kendall bound}
\label{sec:proof}

For the Kendall bound from Theorem~\ref{thm:main}, we will make use of the sharp order-violation bound from Theorem~\ref{thm:pairwise} pointwise for the conditional laws of the Markov kernel.
First, combining \eqref{eq:tau-d-relation} and \eqref{eq:xi-definition}, the inequality \(\xi(C)\leq\tau(C)\) is equivalent to
\begin{equation}
\label{eq:d-b-target}
    \int_{0<t<s<1}D(t,s)\de t\de s
    \leq
    \frac32
    \int_0^1\int_0^1 h_v(t)(1-h_v(t))\de t\de v.
\end{equation}
Hence, it is enough to prove \eqref{eq:d-b-target} for every SI copula \(C\) with conditional laws \((H_t)_{t\in[0,1]}\) and \(h_v(t) = H_t([0,v])\), as defined in \eqref{eq:h-definitions}.
We first prove \eqref{eq:d-b-target} under a kernel-regularity assumption, so that Theorem~\ref{thm:pairwise} can be applied directly.
The regularity assumption is then removed by a Gaussian-copula regularization argument.

\begin{definition}[Kernel regularity]
\label{def:regular}
    We call the chosen Markov kernel \(K_C\) \emph{kernel-regular} if, for
    \(\lambda\)-a.e.~\(t\), the conditional distribution function
    \(
        v\mapsto h_v(t)=K_C(t,[0,v])
    \)
    is continuous on \([0,1]\), strictly increasing in \(v\), and satisfies
    \(
        h_0(t)=0
        \text{ and }
        h_1(t)=1.
    \)
\end{definition}

\begin{proposition}
\label{prop:regular-proof}
If \(C\) is an SI copula whose chosen SI Markov kernel is kernel-regular,
then
\(
    \xi(C)\leq\tau(C).
\)
\end{proposition}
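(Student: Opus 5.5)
The plan is to verify the equivalent form \eqref{eq:d-b-target} by applying Theorem~\ref{thm:pairwise} pointwise to pairs of conditional laws and then integrating. After integration, the resulting symmetrized right-hand side should collapse to the double integral of \(h_v(t)(1-h_v(t))\), by the mixture identity \eqref{eq:mixture-uniform-functions}.

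\emph{Step 1 (pointwise bound).} Let \(N\) be the \(\lambda\)-null set of \(t\) at which kernel-regularity fails. Fix \(t<s\) with \(t,s\notin N\), and put \(F(y):=h_y(t)\) and \(G(y):=h_y(s)\). By kernel-regularity, \(F\) and \(G\) are continuous distribution functions with \(F(0)=G(0)=0\) and \(F(1)=G(1)=1\), and \(F\) is strictly increasing. Since the chosen kernel is SI, we have \(F(y)\ge G(y)\) for \(y\in(0,1)\), and trivially at the endpoints. Taking \(X\sim H_t\) and \(Z\sim H_s\) independent gives \(D(t,s)=\Prob(X>Z)\). Theorem~\ref{thm:pairwise} then yields
\[
D(t,s)\le \frac32\left\{\int_0^1 h_y(t)(1-h_y(t))\,H_s(\de y)+\int_0^1 h_y(s)(1-h_y(s))\,H_t(\de y)\right\}.
\]

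\emph{Step 2 (integration and symmetrization).} Integrate this inequality over \(\{0<t<s<1\}\); the excluded set \(N\) only removes a \(\lambda^2\)-null set. In the second term, rename \((t,s)\mapsto(s,t)\). The second term then becomes the integral of \(\int h_y(t)(1-h_y(t))\,H_s(\de y)\) over \(\{s<t\}\). Adding it to the first term, and using that the diagonal is \(\lambda^2\)-null, the right-hand side equals
\[
\frac32\int_0^1\int_0^1\int_0^1 h_y(t)(1-h_y(t))\,H_s(\de y)\,\de s\,\de t .
\]
For fixed \(t\), apply \eqref{eq:mixture-uniform-functions} to the nonnegative Borel function \(\varphi(y):=h_y(t)(1-h_y(t))\). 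The inner double integral over \(y\) and \(s\) then equals \(\int_0^1 h_y(t)(1-h_y(t))\,\de y\). This gives exactly the right-hand side of \eqref{eq:d-b-target}, and hence \(\xi(C)\le\tau(C)\) via \eqref{eq:tau-d-relation} and \eqref{eq:xi-definition}.

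\emph{Main obstacle.} The only delicate point is the measure-theoretic bookkeeping that justifies these interchanges. We need \((t,s)\mapsto D(t,s)\) and \((t,s)\mapsto\int h_y(t)(1-h_y(t))\,H_s(\de y)\) to be measurable. Both should follow from the joint Borel measurability of \((t,v)\mapsto h_v(t)\) established in Section~\ref{sec:preliminaries}, together with the measurability of kernel integrals of jointly measurable nonnegative functions. Tonelli's theorem then permits all reorderings, since every integrand is nonnegative. I would also check that a null set in the conditioning variable does not affect \eqref{eq:mixture-uniform-functions}, which holds because that identity is itself an integral over \(s\).
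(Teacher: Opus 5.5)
Your proposal is correct and follows the paper's proof essentially step for step. It applies Theorem~\ref{thm:pairwise} with \(F=h_\cdot(t)\) and \(G=h_\cdot(s)\) for \(t<s\) in a full-measure set, integrates over the triangle, symmetrizes \(b_{t,s}+b_{s,t}\) to the full square, and collapses the result via the mixture identity \eqref{eq:mixture-uniform-functions}; your measurability and Tonelli remarks play the role of the paper's appeal to the standard kernel lemmas.
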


\begin{proof}
    Let \(K_C\) be the fixed SI Markov kernel of \(C\), with \(H_t\) and \(h_y(t)\) as in \eqref{eq:h-definitions}.
    Let \(T\subseteq[0,1]\) be a full-measure set such that, for every \(t\in T\), the map \(y\mapsto h_y(t)\) is continuous, strictly increasing, and satisfies \(h_0(t)=0\) and \(h_1(t)=1\).
    If \(t,s\in T\) and \(t<s\), stochastic increasingness of the chosen kernel
    gives
    \begin{equation}
    \label{eq:regular-proof-si-order}
        h_y(t)\geq h_y(s)
        \qquad
        \text{for all }y\in(0,1).
    \end{equation}
    Together with the endpoint identities, this holds for all \(y\in[0,1]\).
    Hence \(H_t\preceq_{\rm st}H_s\).

    For such \(t<s\), apply Theorem~\ref{thm:pairwise} with \(F(y)=h_y(t)\) and \(G(y)=h_y(s)\).
    The regularity assumption ensures that \(F\) and \(G\) are continuous, strictly increasing distribution functions on \([0,1]\), with \(F(0)=G(0)=0\) and \(F(1)=G(1)=1\).
    By \eqref{eq:regular-proof-si-order}, we also have \(F\geq G\).
    Hence, since \(D(t,s)=\Prob(X>Z)\) for independent \(X\sim H_t\) and \(Z\sim H_s\), Theorem~\ref{thm:pairwise} gives
    \begin{equation}
    \label{eq:regular-proof-pairwise-bound}
        D(t,s)
        \leq
        \frac32
        \left\{
            b_{t,s}+b_{s,t}
        \right\},
    \end{equation}
    where
    \begin{equation}
    \label{eq:regular-proof-bts-def}
        b_{t,s}
        :=
        \int_0^1 h_y(t)(1-h_y(t))\,H_s(\de y).
    \end{equation}
    The maps \((t,s)\mapsto D(t,s)\) and \((t,s)\mapsto b_{t,s}\) are
    measurable by the standard product and integration properties of kernels
    \cite[Lems.~3.2--3.3]{kallenberg2021foundations}. Since the exceptional
    set outside \(T^2\) has two-dimensional Lebesgue measure zero, integrating
    \eqref{eq:regular-proof-pairwise-bound} over the triangle gives
    \begin{equation}
    \label{eq:regular-proof-triangle-bound}
        \int_{0<t<s<1}D(t,s)\de t\de s
        \leq
        \frac32
        \int_{0<t<s<1}
        \left\{
            b_{t,s}+b_{s,t}
        \right\}
        \de t\de s.
    \end{equation}
    Moreover,
    \begin{equation}
    \label{eq:regular-proof-triangles-cover-square}
        \int_{0<t<s<1} b_{t,s}\de t\de s
        +
        \int_{0<t<s<1} b_{s,t}\de t\de s
        =
        \int_0^1\int_0^1 b_{t,s}\de s\de t,
    \end{equation}
    because the two triangular regions cover the square up to the diagonal.
    By the mixture identity \eqref{eq:mixture-uniform-functions}, the definition
    of \(b_{t,s}\) in \eqref{eq:regular-proof-bts-def} gives
    \begin{equation}
    \label{eq:regular-proof-reconstruct-xi-term}
        \int_0^1\int_0^1 b_{t,s}\de s\de t
        =
        \int_0^1\int_0^1
            h_y(t)(1-h_y(t))\de y\de t.
    \end{equation}
    Combining \eqref{eq:regular-proof-triangle-bound},
    \eqref{eq:regular-proof-triangles-cover-square}, and
    \eqref{eq:regular-proof-reconstruct-xi-term} gives precisely
    \eqref{eq:d-b-target}, and thus the proof is complete.
\end{proof}

In order to prove the full statement of Theorem~\ref{thm:main}, we record a sharpness example for the Kendall bound under SI.
The family is illustrated in Figure~\ref{fig:ordinal-sums}.

\begin{figure}[t!]
  \centering
  \includegraphics[width=\textwidth]{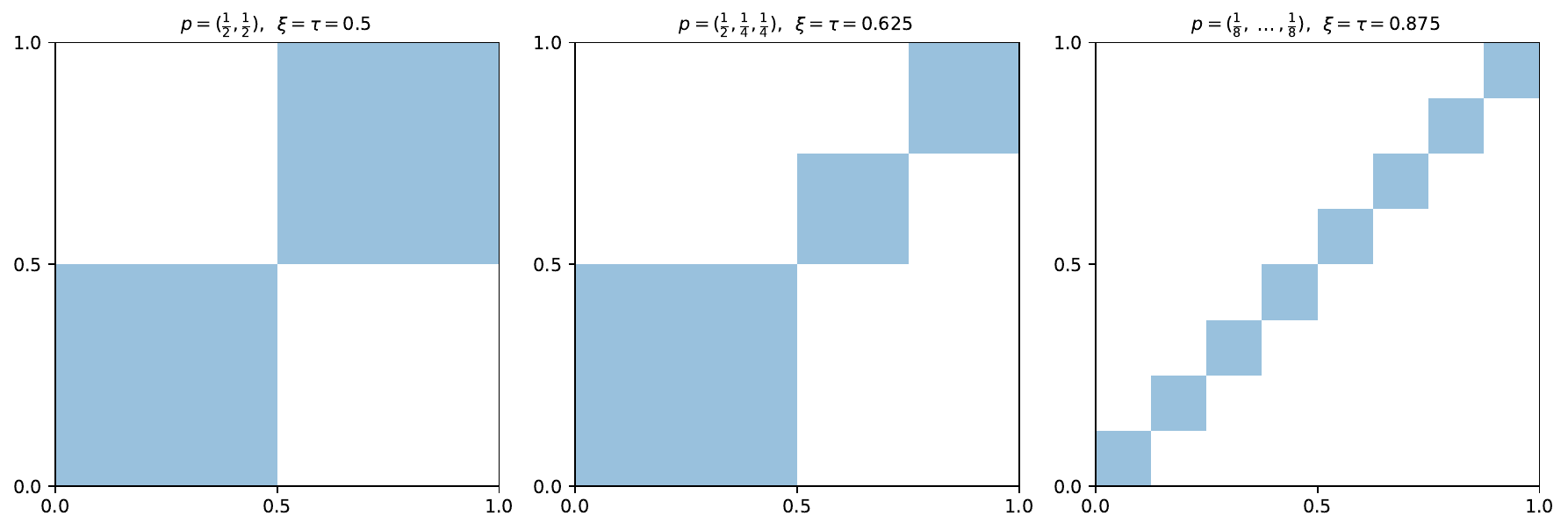}
  \caption{Supports of ordinal sums of product copulas for three partitions of \([0,1]\) with interval lengths \(p_j\), the equality family of Proposition~\ref{prop:ordinal-sum}: the mass is distributed uniformly over the union of the shaded squares \(I_j^2\), and \(\xi(C)=\tau(C)=1-\sum_j p_j^2\) in each case.
  Refining the partition increases both coefficients towards \(1\), while the trivial partition gives \(C=\Pi\) with \(\xi=\tau=0\).
  The family thus interpolates the equality \(\xi=\tau\) across the whole admissible range.}
  \label{fig:ordinal-sums}
\end{figure}

\begin{proposition}[Ordinal sums of product copulas]
\label{prop:ordinal-sum}
Let \(\{I_j\}_{j\in J}\) be a finite or countable family of pairwise disjoint
non-degenerate intervals such that \(\lambda([0,1]\setminus\bigcup_j I_j)=0\), and put \(p_j=\lambda(I_j)>0\).
Consider the copula obtained as follows: choose \(j\) with probability \(p_j\), and conditionally on \(j\), let \(U\) and \(V\) be independent uniform random variables on the same interval \(I_j\).
Then \(C\) is SI and
\[
    \xi(C)=\tau(C)=1-\sum_{j\in J}p_j^2.
\]
\end{proposition}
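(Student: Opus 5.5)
The proof is a direct computation. Write the intervals as \(I_j=[a_j,b_j)\) up to endpoints, so \(p_j=b_j-a_j\). Since the \(I_j\) are disjoint and cover \([0,1]\) up to a null set, the first marginal is uniform. Conditionally on \(j\), \(U\) is uniform on \(I_j\), and \(j\) is determined by \(U\) almost surely. So a version of the Markov kernel is
\[
    K_C(t,\cdot)=\text{Uniform}(I_j)\quad\text{for }t\in I_j,
\]
which gives
\[
    h_v(t)=\frac{(v-a_j)^+\wedge p_j}{p_j},\qquad t\in I_j .
\]
The mixture of these kernels is Lebesgue measure, so \(C\) is indeed a copula.

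\textbf{SI.} For \(t<s\) lying in intervals \(I_j\) and \(I_k\), either \(j=k\), and then the conditional laws coincide, or \(I_j\) lies entirely to the left of \(I_k\). In the latter case \(\text{Uniform}(I_j)\preceq_{\rm st}\text{Uniform}(I_k)\), because \(h_v(t)=1\) whenever \(h_v(s)>0\). On the null complement of \(\bigcup_j I_j\) we may set \(K_C(t,\cdot)=\delta_t\) without affecting monotonicity. The disjoint intervals are linearly ordered, so \(t\mapsto h_v(t)\) is non-increasing for every \(v\).

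\textbf{\(\xi\).} Use the second form of \eqref{eq:xi-definition}. For \(t\in I_j\), the integrand \(h_v(t)(1-h_v(t))\) vanishes for \(v\notin I_j\). On \(I_j\) it integrates to
\[
    p_j\int_0^1 r(1-r)\de r=\frac{p_j}{6}.
\]
Integrating over \(t\in I_j\) contributes \(p_j^2/6\), so
\[
    \xi(C)=1-6\sum_j \frac{p_j^2}{6}=1-\sum_j p_j^2 .
\]

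\textbf{\(\tau\).} Use \eqref{eq:tau-d-relation}. For \(t<s\):
\begin{itemize}
\item if \(t\in I_j\) and \(s\in I_k\) with \(j\neq k\), then \(D(t,s)=0\), since every \(H_t\)-sample lies below every \(H_s\)-sample;
\item if both \(t,s\in I_j\), then \(D(t,s)=1/2\), since the two laws are the same continuous law.
\end{itemize}
The set \(\{t<s,\ t,s\in I_j\}\) has area \(p_j^2/2\). Hence
\[
    \int_{t<s}D=\sum_j \frac{p_j^2}{4},\qquad
    \tau(C)=1-\sum_j p_j^2 .
\]

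The only mildly delicate point is the countable case. There one must check that the kernel is well defined and measurable, and that the sums may be interchanged with the integrals. Both follow from monotone convergence, since all terms are nonnegative and \(\sum_j p_j=1\). Otherwise no real obstacle is expected. This example shows that the bound \(\xi\leq\tau\) is sharp across the whole range.
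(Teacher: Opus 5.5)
Your proof is correct and follows essentially the same route as the paper: SI from the left-to-right ordering of the blocks, $\xi$ by blockwise integration giving $p_j^2/6$ per block, and $\tau$ from the fact that pairs in different blocks are never discordant while pairs in the same block are discordant with probability $1/2$ (you phrase this through $D(t,s)$ and \eqref{eq:tau-d-relation}, the paper through the block indices $L,L'$ of two independent copies). Your convention $K_C(t,\cdot)=\delta_t$ off $\bigcup_j I_j$ is a bit more careful than the paper's endpoint remark, since in the countable case this null set can be uncountable (e.g.\ the Cantor set when the $I_j$ are the removed middle thirds), and $\delta_t$ keeps the kernel monotone at every $t$, as the SI definition requires.
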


\begin{proof}
    We take a disjoint Borel version of the interval partition, changing only endpoints if necessary.
At common endpoints of adjacent intervals, the conditional law may be chosen arbitrarily, for instance according to the interval on the right.
The set of such endpoints is countable and \(\lambda\)-null, so this convention does not affect the copula, \(\xi\), or \(\tau\).

If \(t\in I_j\), the conditional law \(H_t\) is uniform on \(I_j\).
Hence, for \(t<s\), either \(t\) and \(s\) lie in the same interval and the conditional laws are identical, or \(t\) lies in an interval strictly to the left of the interval containing \(s\).
Since intervals in the partition are ordered along \([0,1]\), this means that for every \(v\in(0,1)\) the map \(t\mapsto h_v(t)\) is non-increasing.
Thus \(C\) is SI.
Furthermore, the map \(v\mapsto h_v(t)\) is the distribution function of the
uniform law on \(I_j\). Let
\(
    a_j:=\inf I_j,
    b_j:=\sup I_j,
    \text{ and }
    p_j=b_j-a_j.
\)
Endpoint conventions are immaterial for the following integral identities.
For \(t\in I_j\), we may write, up to values at the endpoints,
\[
h_v(t)
=
\begin{cases}
0, & v<a_j,\\
\dfrac{v-a_j}{p_j}, & a_j<v<b_j,\\
1, & v>b_j.
\end{cases}
\]
Consequently,
\[
\int_0^1 h_v(t)(1-h_v(t))\de v
=
\int_{a_j}^{b_j}
\frac{v-a_j}{p_j}
\left(1-\frac{v-a_j}{p_j}\right)
\de v 
=
p_j\int_0^1 x(1-x)\de x
=
\frac{p_j}{6}.
\]
Integrating over \(t\in I_j\) gives the contribution
\(
\int_{I_j}\int_0^1 h_v(t)(1-h_v(t))\de v\de t
=
\frac{p_j^2}{6}.
\)
Summing over \(j\in J\) and using \eqref{eq:xi-definition} yields
\(
\xi(C)
=
1-6\sum_{j\in J}\frac{p_j^2}{6}
=
1-\sum_{j\in J}p_j^2.
\)

For Kendall's tau, take two independent copies \((U,V)\) and \((U',V')\) of the random vector with copula \(C\).
Let \(L\) and \(L'\) denote the random indices of the intervals containing \((U,V)\) and \((U',V')\), respectively.
If \(L\neq L'\), then the two observations lie in distinct diagonal blocks.
Hence the order of the \(U\)-coordinates is the same as the order of the \(V\)-coordinates, so the two observations are concordant.
Thus the conditional Kendall contribution on \((L\neq L')\) is \(1\).
On the other hand, conditional on \(L=L'=j\), both observations lie in the same block \(I_j^2\), where the copula is the product copula.
Equivalently, inside this block the signs of \(U-U'\) and \(V-V'\) are independent and symmetric.
Therefore the conditional probabilities of concordance and discordance are both \(1/2\), and the conditional Kendall contribution is zero.
Since
\(
\Prob(L=L')=\sum_{j\in J}\Prob(L=j)^2=\sum_{j\in J}p_j^2,
\)
we obtain \(\tau(C)=\Prob(L\neq L')=1-\sum_{j\in J}p_j^2\).
Thus \(\xi(C)=\tau(C)\).
\end{proof}

We are now ready to prove the main theorem in full generality.

\begin{proof}[Proof of Theorem~\ref{thm:main}]
Let \(C\) be SI.
By Proposition~\ref{prop:regular-proof}, if the chosen SI Markov kernel of \(C\) is  kernel-regular, then \(\xi(C)\leq\tau(C)\).
It hence remains to remove the regularity assumption.

Let \(G_r\), \(0<r<1\), denote the Gaussian copula with correlation parameter \(r\), and put
\[
    C_r:=C*G_r,
\]
where \(*\) denotes the Markov product from \eqref{eq:markov-product-kernel}.
Let \(\Phi\) denote the standard normal distribution function.
We use the following version of the Gaussian transition kernel, obtained from the standard conditional distribution formula for the bivariate normal copula, see \cite[Eq.~(3.1)]{meyer2013bivariate}.
For \(x\in(0,1)\),
\[
    K_r(x,[0,y])
    =
    \Phi\left(
        \frac{\Phi^{-1}(y)-r\Phi^{-1}(x)}{\sqrt{1-r^2}}
    \right),
    \qquad 0\leq y\leq1
\]
with endpoint conventions
\(
    K_r(0,\cdot)=\delta_0
\)
and
\(
    K_r(1,\cdot)=\delta_1.
\)
The endpoint values are immaterial for the Gaussian copula itself, but they make \(x\mapsto K_r(x,[0,y])\) non-increasing on all of \([0,1]\) for every \(y\in(0,1)\).
By the kernel-composition formula \eqref{eq:markov-product-kernel}, \(C_r=C*G_r\) admits the Markov kernel
\[
    K_{C_r}(t,E)
    =
    \int_0^1 K_r(x,E)\,K_C(t,\de x),
    \qquad
    t\in[0,1],\quad E\in\mathcal B([0,1]).
\]

We first check that this kernel is stochastically increasing.
Since \(G_r\) is stochastically increasing, see, e.g.,
\cite[Table~5]{ansari2023dependence}, the map
\(
    x\mapsto K_r(x,[0,y])
\)
is non-increasing for every \(y\in(0,1)\).
Moreover, since \(K_C\) is the fixed SI kernel of \(C\), we have
\(K_C(t,\cdot)\preceq_{\rm st}K_C(s,\cdot)\) whenever \(t<s\).
By the standard test-function characterization of stochastic order as in \cite[Sec.~1.A, Eq.~(1.A.7)]{shaked2007stochastic}, if \(\mu\preceq_{\rm st}\nu\), then integration against bounded non-increasing test functions is larger under \(\mu\) than under \(\nu\).
Therefore, for
\(t<s\),
\[
    K_{C_r}(t,[0,y])
    =
    \int_0^1 K_r(x,[0,y])\,K_C(t,\de x)
    \geq
    \int_0^1 K_r(x,[0,y])\,K_C(s,\de x)
    =
    K_{C_r}(s,[0,y]).
\]
Thus \(K_{C_r}\) is a monotone Markov kernel of \(C_r\), and hence \(C_r\)
is stochastically increasing.

We next note that this kernel is kernel-regular in the sense of Definition~\ref{def:regular}.
For \(\lambda\)-a.e.~\(t\), the conditional distribution function of the second coordinate under \(C_r\), given the first coordinate \(t\), is
\begin{equation}
\label{eq:main-proof-regularized-h}
    h_y^{(r)}(t)
    =
    K_{C_r}(t,[0,y])
    =
    \int_0^1 K_r(x,[0,y])\,K_C(t,\de x).
\end{equation}
For each \(x\in(0,1)\), the map \(y\mapsto K_r(x,[0,y])\) is continuous on \([0,1]\), strictly increasing on \((0,1)\), and has a strictly positive density on \((0,1)\).
Since \eqref{eq:mixture-uniform-functions} applied to
\(\varphi=\mathbf 1_{\{0,1\}}\) gives
\[
    \int_0^1 H_t(\{0,1\})\de t=\lambda(\{0,1\})=0,
\]
we have \(H_t(\{0,1\})=0\) for \(\lambda\)-a.e.~\(t\).
Fix such a \(t\).
Continuity of \(y\mapsto h_y^{(r)}(t)\) follows from dominated convergence in \eqref{eq:main-proof-regularized-h}.
Moreover, if \(0<y_1<y_2<1\), then \(K_r(x,(y_1,y_2])>0\) for every \(x\in(0,1)\), and therefore
\[
    h_{y_2}^{(r)}(t)-h_{y_1}^{(r)}(t)
    =
    \int_0^1 K_r(x,(y_1,y_2])\,K_C(t,\de x)
    >0,
\]
because \(K_C(t,(0,1))=1\).
Hence, for \(\lambda\)-a.e.~\(t\), the map \(y\mapsto h_y^{(r)}(t)\) is continuous on \([0,1]\), strictly increasing on \((0,1)\), and satisfies \(h_0^{(r)}(t)=0\) and \(h_1^{(r)}(t)=1\).
Thus \(C_r\), equipped with the above monotone kernel, is kernel-regular in the sense of Definition~\ref{def:regular}.
Proposition~\ref{prop:regular-proof} therefore gives
\begin{equation}
\label{eq:main-proof-regularized-inequality}
    \xi(C_r)\leq \tau(C_r)
    \qquad
    \text{for every }0<r<1.
\end{equation}

It remains to let \(r\uparrow1\).
To see that \(C_r\to C\), realize the Gaussian transition as follows.
If \((U,V)\sim C\) and \(Z\sim N(0,1)\) is independent of \((U,V)\), set
\[
    W_r:=\Phi(r\Phi^{-1}(V)+\sqrt{1-r^2}\,Z).
\]
Then the copula of \((V,W_r)\) is \(G_r\), and \(W_r\to V\) almost surely as \(r\uparrow1\).
Conditionally on \(V=x\), the variable \(W_r\) has transition kernel
\(K_r(x,\cdot)\). Disintegrating first with respect to \(U\), whose
conditional law of \(V\) is \(K_C(U,\cdot)\), gives precisely the composed
kernel
\[
    t\mapsto \int_0^1 K_r(x,\cdot)\,K_C(t,dx),
\]
which is the Markov kernel of \(C*G_r\).
Hence, if \((U,V)\sim C\), the copula of \((U,W_r)\) is \(C_r=C*G_r\), and \((U,W_r)\to(U,V)\) almost surely.
Consequently, \(C_r\to C\) pointwise, hence uniformly because copulas are uniformly Lipschitz.
Since each \(C_r\) is stochastically increasing and \(C_r\to C\) pointwise, the hypotheses of \cite[Cor.~3.6]{ansari2026continuity} are satisfied.
By the weak continuity of Kendall's tau and by that continuity result for Chatterjee's \(\xi\), it is \(\tau(C_r)\to\tau(C)\) and \(\xi(C_r)\to\xi(C)\).
Letting \(r\uparrow1\) in \eqref{eq:main-proof-regularized-inequality} yields
\(
    \xi(C)\leq\tau(C),
\)
which establishes the first part of the theorem.

Let now \(C^\sigma\) be the copula of \((U,1-V)\), where \((U,V)\sim C\).
If \(C\) is SD, then \(C^\sigma\) is SI.
Indeed, if \(h_y(t)=K_C(t,[0,y])\), define
\(
    h_{a-}(t):=K_C(t,[0,a))=\lim_{z\uparrow a}h_z(t)
\)
for
\(
a\in(0,1].
\)
The reflected kernel may be chosen as
\[
    h_y^\sigma(t)
    =
    K_{C^\sigma}(t,[0,y])
    =
    K_C(t,[1-y,1])
    =
    1-K_C(t,[0,1-y))
    =
    1-h_{(1-y)-}(t).
\]
Since \(C\) is SD, the map \(t\mapsto h_z(t)\) is non-decreasing for every \(z\in(0,1)\).
Hence \(t\mapsto h_{a-}(t)\), being a monotone limit of non-decreasing functions, is non-decreasing for every \(a\in(0,1]\).
Therefore \(t\mapsto h_y^\sigma(t)\) is non-increasing for every \(y\in(0,1)\), and \(C^\sigma\) is SI.
Moreover
\(
    \tau(C^\sigma)=-\tau(C)
\)
and \(\xi(C^\sigma)=\xi(C)\).
The latter follows, for instance, from \(C^\sigma(u,v)=u-C(u,1-v)\), so that \(\partial_1 C^\sigma(u,v)=1-\partial_1 C(u,1-v)\) for a.e.~\((u,v)\), together with \(\int_0^1\int_0^1 \partial_1 C(u,v)\de u\de v=\frac12\).
Applying the SI case just proved to \(C^\sigma\) gives
\[
    \xi(C)=\xi(C^\sigma)\leq \tau(C^\sigma)=-\tau(C).
\]
The equality examples from Proposition~\ref{prop:ordinal-sum} and their reflected versions show sharpness.
\end{proof}

We close this section with two examples that clarify the scope of Theorem~\ref{thm:main}.
Their mass distributions are shown in Figure~\ref{fig:checkerboards}.
\begin{figure}[t!]
  \centering
  \includegraphics[width=\textwidth]{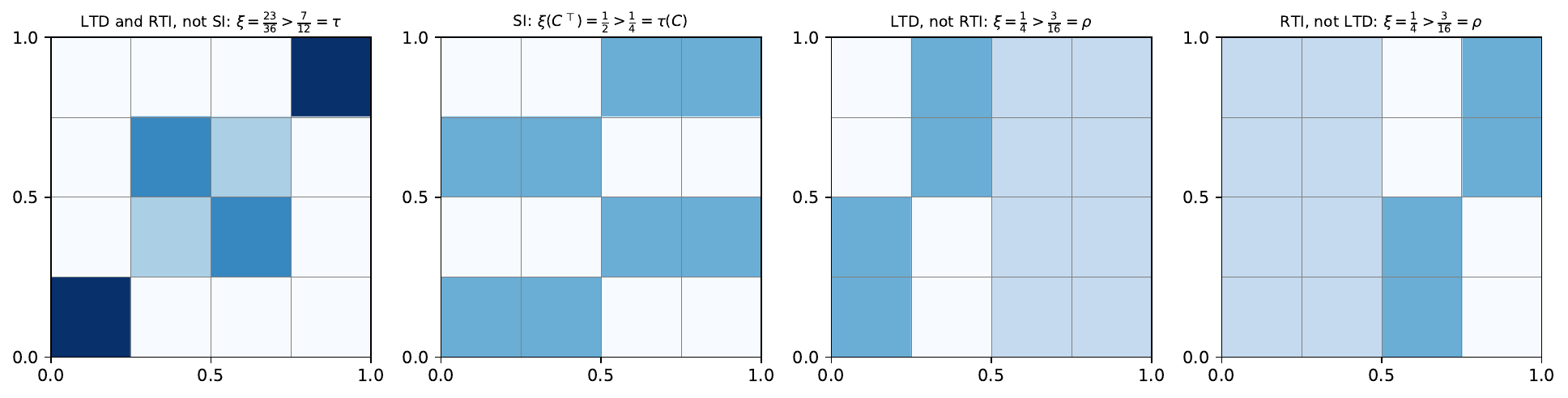}
  \caption{Densities of the four checkerboard copulas used in Example~\ref{rem:ltd-rti-kendall-bound}, Examples~\ref{rem:si-kendall-bound-directional} and~\ref{rem:ltd-rti-spearman-bound-directional} (same copula), Example~\ref{rem:ltd-alone-not-sufficient}, and Example~\ref{rem:rti-alone-not-sufficient} (left to right).
  Darker cells carry more mass, and thin lines mark the cells \(I_i\times I_j\) with \(I_i=[\frac{i-1}{4},\frac{i}{4}]\).
  The first copula is LTD and RTI but not SI and violates \(\xi\leq\tau\); the second is SI, but its reversed coefficient \(\xi(C^\top)\) exceeds both \(\tau(C)\) and \(\rho(C)\).
  The last two satisfy exactly one tail condition each and violate \(\xi\leq\rho\).
  All coefficient values are obtained in closed form from the checkerboard formulas of \cite[Prop.~3.3]{rockel2025measures}.}
  \label{fig:checkerboards}
\end{figure}
First, the SI assumption cannot be weakened to the LTD and RTI tail conditions appearing in Theorem~\ref{thm:ltd-rti-xi-rho}.
Second, the Kendall bound is directional: even when \(\xi(C)\leq\tau(C)\), one need not have \(\xi(C^\top)\leq\tau(C)\).

In all checkerboard examples below, the displayed intervals
\(
    I_i=\left[\frac{i-1}{4},\frac{i}{4}\right]
\)
are understood with the half-open convention
\(
    I_i=\left[\frac{i-1}{4},\frac{i}{4}\right)
\)
for \(i=1,2,3\) and
\(
    I_4=\left[\frac34,1\right].
\)
Values of the conditional kernel on the finitely many grid boundaries may be chosen arbitrarily, or by the right-continuous strip convention.
These boundary choices are \(\lambda\)-null and do not affect the copula or any of the rank coefficients.
All stripwise monotonicity checks below refer to this chosen version.

\begin{example}[LTD and RTI jointly do not suffice for $\xi\leq\tau$]
\label{rem:ltd-rti-kendall-bound}
Theorem~\ref{thm:ltd-rti-xi-rho} shows that the same-direction LTD and RTI assumptions are sufficient for the Spearman bound \(\xi(C)\leq\rho(C)\).
They are not, however, sufficient for the sharper Kendall bound \(\xi(C)\leq\tau(C)\).
Let \(C\) be the checkerboard copula on the partition
\(
    I_i=\left[\frac{i-1}{4},\frac{i}{4}\right]
\)
for
\(
    i=1,\dots,4,
\)
with cell probabilities \(p_{ij}=m_{ij}/4\), where
\[
    M=(m_{ij})_{i,j=1}^4
    =
    \begin{pmatrix}
        1&0&0&0\\
        0&1/3&2/3&0\\
        0&2/3&1/3&0\\
        0&0&0&1
    \end{pmatrix}.
\]
Equivalently, \(C\) has density \(4m_{ij}\) on \(I_i\times I_j\), shown in the first panel of Figure~\ref{fig:checkerboards}.
Since \(M\) is doubly stochastic, \(C\) is a copula.

We first verify the LTD and RTI conditions.
A version of the conditional kernel is constant on each vertical strip.
For \(u\in I_i\), write
\(
    H_i(y)=K_C(u,[0,y]),
\)
and for fixed \(y\), set \(a_i(y)=H_i(y)\).
Then
\[
\begin{array}{c|c}
 y\text{-range} & (a_1(y),a_2(y),a_3(y),a_4(y))\\
\hline
 0\leq y\leq 1/4
    & (r,0,0,0),\quad r=4y,\\[2mm]
 1/4\leq y\leq 1/2
    & \left(1,\frac r3,\frac{2r}{3},0\right),\quad r=4y-1,\\[2mm]
 1/2\leq y\leq 3/4
    & \left(1,\frac{1+2r}{3},\frac{2+r}{3},0\right),\quad r=4y-2,\\[2mm]
 3/4\leq y\leq 1
    & (1,1,1,r),\quad r=4y-3.
\end{array}
\]
For every \(y\), these vectors satisfy
\[
    a_m(y)\leq \frac{1}{m-1}\sum_{i<m}a_i(y),
    \qquad m=2,3,4,
\]
and
\[
    a_m(y)\geq \frac{1}{4-m}\sum_{i>m}a_i(y),
    \qquad m=1,2,3.
\]
Indeed, these inequalities are checked directly from the four displayed cases.
For a step function with equal-width steps \(a_1(y),\dots,a_4(y)\), the first set of inequalities is precisely the condition that the left-tail averages are non-increasing, and the second set is precisely the condition that the right-tail averages are non-increasing.
Hence, for every \(y\),
\[
    u\longmapsto \Prob(V\leq y\mid U\leq u)
    \quad\text{and}\quad
    u\longmapsto \Prob(V\leq y\mid U>u)
\]
are non-increasing.
Thus \(C\) is LTD and RTI in the stated direction.
Since \(M\) is symmetric, the same is true with \(U\) and \(V\) interchanged.
However, \(C\) is not SI.
Indeed,
\(
    H_2(1/2)=\frac13
\)
whereas
\(
    H_3(1/2)=\frac23,
\)
so the map \(u\mapsto K_C(u,[0,1/2])\) is not non-increasing.

It remains to compare the two coefficients.
Let
\(
    \Delta=(p_{ij})_{i,j=1}^4=\frac14 M
\)
be the associated \(4\times4\) checkerboard matrix.
Applying the explicit checkerboard formulas from \cite[Prop.~3.3]{rockel2025measures} with \(m=n=4\) gives
\(
    \xi(C)
    =
    \frac{23}{36},
\)
and
\(
    \tau(C)
    =
    \frac{7}{12}.
\)
Consequently,
\(
    \xi(C)-\tau(C)
    =
    \frac1{18}>0.
\)
Thus, the LTD and RTI tail conditions are sufficient for the Spearman bounds \(\tau\leq\rho\) and \(\xi\leq\rho\), but not for the sharper Kendall bound \(\xi\leq\tau\).
\end{example}

\begin{example}[The SI Kendall bound is directional]
\label{rem:si-kendall-bound-directional}
The implication in Theorem~\ref{thm:main} is genuinely directional.
In general, stochastic increasingness of \(C\) does not imply that
\(
    \xi(C^\top)\leq \tau(C).
\)
Let \(C\) be the checkerboard copula on the partition
\(
    I_i=\left[\frac{i-1}{4},\frac{i}{4}\right]
\)
for
\(
 i=1,\dots,4,
\)
with cell probabilities \(p_{ij}=m_{ij}/4\), where
\[
    M=(m_{ij})_{i,j=1}^4
    =
    \begin{pmatrix}
        1/2&0&1/2&0\\
        1/2&0&1/2&0\\
        0&1/2&0&1/2\\
        0&1/2&0&1/2
    \end{pmatrix}.
\]
Since \(M\) is doubly stochastic, this defines a copula, and its density is shown in the second panel of Figure~\ref{fig:checkerboards}.
We first check that \(C\) is SI.
For \(u\in I_1\cup I_2\), the conditional law is
\(
    A:=\frac12\,\mathrm{Unif}(I_1)+\frac12\,\mathrm{Unif}(I_3),
\)
whereas for \(u\in I_3\cup I_4\), it is
\(
    B:=\frac12\,\mathrm{Unif}(I_2)+\frac12\,\mathrm{Unif}(I_4).
\)
The distribution functions satisfy
\(
    F_A(y)\geq F_B(y)
\)
for
\(
0\leq y\leq1.
\)
Thus \(u\mapsto K_C(u,[0,y])\) is non-increasing for every \(y\), and hence
\(C\) is SI.
However, for the transpose direction, applying the explicit checkerboard formulas from \cite[Prop.~3.3]{rockel2025measures} with
\(
    \Delta=(p_{ij})_{i,j=1}^4=\frac14 M
\)
gives
\(
    \xi(C^\top)=\frac12
\)
and
\(
    \tau(C)=\frac14.
\)
Consequently,
\(
    \xi(C^\top)>\tau(C).
\)
Thus, the Kendall bound in Theorem~\ref{thm:main} holds for the same direction in which the copula is stochastically increasing, but not necessarily for the reversed Chatterjee coefficient.
\end{example}

\section{The Spearman bound under LTD and RTI}
\label{sec:ltd-rti-rho}

In this section, we prove Theorem~\ref{thm:ltd-rti-xi-rho}.
For the equality case, we first establish two auxiliary one-dimensional lemmas that characterize the extremal behavior in the estimates underlying Theorem~\ref{thm:ltd-rti-xi-rho}.
For \(g\in L^\infty(0,1)\) with antiderivative \(G(u):=\int_0^u g(t)\de t\), define
\begin{equation}\label{eq:one-dimensional-quantities}
    Q(g):=\int_0^1 g(u)^2\de u,
    \qquad
    B(g):=\int_0^1 G(u)\de u,
    \qquad
    P(g):=\int_0^1 g(u)_+\de u,
\end{equation}
where \(g_+(u):=\max\{g(u),0\}\) and similarly \(g_-(u):=-\min\{g(u),0\}\).

\begin{lemma}
\label{lem:xi-rho-section-estimate}
    Fix \(v\in(0,1)\), and let \(g\in L^\infty(0,1)\) satisfy \(-v\leq g(u)\leq 1-v\) for a.e.~\(u\in(0,1)\), \(\int_0^1 g(u)\de u=0\), and
\[
    G(u)\geq0,
    \qquad
    ug(u)\leq G(u),
    \qquad
    -(1-u)g(u)\leq G(u)
    \qquad\text{for a.e.~}u\in(0,1).
\]
    Then
    \begin{align}
    \label{eq:one-dimensional-QP-deficit}
        P(g)-Q(g)
        &=
        \int_0^1 g_+(u)\bigl((1-v)-g_+(u)\bigr)\de u
        +
        \int_0^1 g_-(u)\bigl(v-g_-(u)\bigr)\de u, \\
    \label{eq:one-dimensional-PB-deficit}
        2B(g)-P(g)
        &=
        \int_{\{g>0\}}G(u)-ug(u)\de u
        +
        \int_{\{g<0\}}G(u)+(1-u)g(u)\de u
        +
        \int_{\{g=0\}}G(u)\de u,
    \end{align}
    and in particular \(Q(g)\leq P(g)\leq 2B(g)\).
\end{lemma}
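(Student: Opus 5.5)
The plan is to verify the two identities by direct pointwise algebra plus one integration by parts, and then to read off the inequalities from the sign of each integrand under the hypotheses.

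For \eqref{eq:one-dimensional-QP-deficit}, use the pointwise decompositions \(g=g_+-g_-\), \(g^2=g_+^2+g_-^2\) (since \(g_+g_-=0\)) and \(g_+=g+g_-\). Integrating and using \(\int_0^1 g=0\) gives \(\int g_+=\int g_-=P(g)\), and hence
\[
P(g)=(1-v)\int g_+ + v\int g_-.
\]
Subtracting \(Q(g)=\int g_+^2+\int g_-^2\) yields \eqref{eq:one-dimensional-QP-deficit}. The bounds \(0\le g_+\le 1-v\) and \(0\le g_-\le v\) follow from \(-v\le g\le 1-v\), so both integrands are nonnegative and \(Q(g)\le P(g)\).

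For \eqref{eq:one-dimensional-PB-deficit}, first rewrite \(B(g)\). Because \(G\) is absolutely continuous with \(G(0)=0\) and \(G(1)=\int_0^1 g=0\), integration by parts gives
\[
B(g)=\int_0^1 G(u)\de u=\bigl[uG(u)\bigr]_0^1-\int_0^1 ug(u)\de u=-\int_0^1 ug(u)\de u.
\]
Writing \(B\) as \(\int G\) in one copy and as \(-\int ug\) in the other, we get
\[
2B(g)=\int_0^1 G\de u-\int_0^1 ug\de u.
\]
Split this over \(\{g>0\}\), \(\{g<0\}\) and \(\{g=0\}\). On \(\{g>0\}\) we have \(g=g_+\), and the integrand \(G-ug\) appears directly. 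On \(\{g<0\}\) we need the integrand \(G+(1-u)g\). To produce it, use \(G-ug=G+(1-u)g-g\) and \(-g=g_-\), which gives
\[
2B(g)=\int_{\{g>0\}}(G-ug)+\int_{\{g<0\}}\bigl(G+(1-u)g\bigr)+\int_{\{g<0\}} g_-+\int_{\{g=0\}}G.
\]
Since \(\int_{\{g<0\}}g_-=P(g)\), moving it to the left side yields exactly \eqref{eq:one-dimensional-PB-deficit}. Each integrand is nonnegative by the three hypotheses \(G\ge0\), \(ug\le G\) and \(-(1-u)g\le G\), so \(P(g)\le 2B(g)\).

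The only delicate step is the bookkeeping that distributes \(-\int ug\) between the regions so that the leftover equals \(P\). The choice that works is to add and subtract \(g\) only on \(\{g<0\}\), as above. The boundary term \(uG(u)\) vanishes because \(G(1)=0\).
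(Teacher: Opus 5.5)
Your proof is correct and follows essentially the same route as the paper: the first identity comes from \(\int_0^1 g_+=\int_0^1 g_-=P(g)\) together with \(g^2=g_+^2+g_-^2\), and the second from integrating \(B(g)\) by parts and splitting over the sign sets of \(g\). The only cosmetic difference is in the bookkeeping: the paper writes \(B(g)=\int_0^1(1-u)g(u)\de u\) and subtracts \(P(g)-B(g)\) from \(B(g)\), whereas you write \(B(g)=-\int_0^1 ug(u)\de u\) and add and subtract \(g\) on \(\{g<0\}\); these are equivalent because \(\int_0^1 g=0\).
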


\begin{proof}
    Since \(\int_0^1 g(u)\de u=0\), we have
    \(
        P(g)
        =
        \int_0^1 g_+(u)\de u
        =
        \int_0^1 g_-(u)\de u.
    \)
    Hence
    \[
    \begin{aligned}
        P(g)-Q(g)
        &=
        (1-v)\int_0^1 g_+(u)\de u
        +
        v\int_0^1 g_-(u)\de u
        -
        \int_0^1\bigl(g_+(u)^2+g_-(u)^2\bigr)\de u  \\
        &=
        \int_0^1 g_+(u)\bigl((1-v)-g_+(u)\bigr)\de u
        +
        \int_0^1 g_-(u)\bigl(v-g_-(u)\bigr)\de u.
    \end{aligned}
    \]
    This proves \eqref{eq:one-dimensional-QP-deficit}, and the right-hand
    side is nonnegative because \(g_+(u)\leq1-v\) and \(g_-(u)\leq v\) for
    a.e.~\(u\).
    Moreover, \(G(0)=G(1)=0\), and integration by parts gives
    \[
        B(g)
        =
        \int_0^1 G(u)\de u
        =
        \int_0^1 (1-u)g(u)\de u.
    \]
    Therefore
    \[
    \begin{aligned}
        P(g)-B(g)
        &=
        \int_{\{g>0\}} u g(u)\de u
        +
        \int_{\{g<0\}} (1-u)(-g(u))\de u.
    \end{aligned}
    \]
    Subtracting this identity from \(B(g)=\int_0^1G(u)\de u\) gives
    \eqref{eq:one-dimensional-PB-deficit}. Its right-hand side is
    nonnegative by the assumptions \(G(u)\geq0\), \(ug(u)\leq G(u)\), and
    \(-(1-u)g(u)\leq G(u)\), each holding for a.e.~\(u\). Thus
    \(P(g)\leq2B(g)\), and the proof is complete.
\end{proof}

\begin{lemma}
\label{lem:xi-rho-equality-section}
Fix \(v\in(0,1)\), and let \(g\in L^\infty(0,1)\) satisfy \(-v\leq g(u)\leq 1-v\) for a.e.~\(u\in(0,1)\), \(\int_0^1 g(u)\de u=0\), and
\[
    G(u)\geq0,
    \qquad
    ug(u)\leq G(u),
    \qquad
    -(1-u)g(u)\leq G(u)
    \qquad\text{for a.e.~}u\in(0,1).
\]
If \(Q(g) = 2B(g)\), then either
\[
    g(u)=0
    \quad\text{ for a.e.~}u\in(0,1),
    \qquad\text{or}\qquad
    g(u)
    =
    \begin{cases}
        1-v, & 0<u<v,\\
        -v, & v<u<1,
    \end{cases}
    \quad\text{for a.e.~}u\in(0,1).
\]
\end{lemma}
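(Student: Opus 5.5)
The plan is to combine the two deficit identities of Lemma~\ref{lem:xi-rho-section-estimate}. If \(Q(g)=2B(g)\), then the chain \(Q(g)\leq P(g)\leq 2B(g)\) forces both \(P(g)-Q(g)=0\) and \(2B(g)-P(g)=0\). Each is an integral of an a.e.\ nonnegative integrand, by \eqref{eq:one-dimensional-QP-deficit} and \eqref{eq:one-dimensional-PB-deficit}, so each integrand vanishes a.e.

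\textbf{Consequences of \eqref{eq:one-dimensional-QP-deficit}.} From \(g_+((1-v)-g_+)=0\) and \(g_-(v-g_-)=0\) a.e., we get \(g(u)\in\{-v,0,1-v\}\) for a.e.\ \(u\).

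\textbf{Consequences of \eqref{eq:one-dimensional-PB-deficit}.} We obtain the following a.e.\ relations:
\begin{itemize}
\item on \(\{g=0\}\), \(G=0\);
\item on \(\{g>0\}\), \(G(u)=u g(u)=(1-v)u\);
\item on \(\{g<0\}\), \(G(u)=-(1-u)g(u)=v(1-u)\).
\end{itemize}
Since \(G\) is Lipschitz (in fact absolutely continuous with \(G'=g\) a.e.), these pointwise identities are rigid.

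\textbf{Excluding a positive-measure zero set.} Suppose \(g\) is not a.e.\ zero. On \(\{g=0\}\) we have \(G=0\). Let \(A=\{G=0\}\); then at a.e.\ point of \(A\) (density points where \(G\) is differentiable) we have \(g=G'=0\). Conversely, if \(u\in\{g>0\}\), then \(G(u)=(1-v)u>0\) for \(u>0\). Similarly, on \(\{g<0\}\), \(G(u)=v(1-u)>0\).

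Next, on the open set \(\{G>0\}\), the value \(g=0\) cannot occur on a positive-measure set, since there \(G=0\) would be required. Hence on \(\{G>0\}\), \(g\in\{1-v,-v\}\) a.e. Moreover, \(G\) coincides a.e.\ (hence everywhere, by continuity, on the closure of each piece) with \(\max\)/\(\min\)-type functions: wherever \(g=1-v\), \(G(u)=(1-v)u\), and wherever \(g=-v\), \(G(u)=v(1-u)\).

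\textbf{The key step.} The two lines \(\ell_1(u)=(1-v)u\) and \(\ell_2(u)=v(1-u)\) intersect only at \(u=v\). The main obstacle is to show that a continuous \(G\) with \(G'=g\) satisfying these relations must follow \(\ell_1\) on \((0,v)\) and \(\ell_2\) on \((v,1)\).

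I would argue via the function \(\psi:=G-\ell_1\). On \(\{g=1-v\}\) we have \(\psi=0\), and there \(\psi'=0\). On \(\{g=-v\}\) we have \(\psi=\ell_2-\ell_1\), which is nonzero off \(u=v\), and there \(\psi'=-1\), which matches \((\ell_2-\ell_1)'=-1\). So let \(E_1=\{G=\ell_1\}\) and \(E_2=\{G=\ell_2\}\). These are closed sets whose union contains a.e.\ point of the component of \(\{G>0\}\), and \(E_1\cap E_2\subseteq\{v\}\).

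By connectedness of an interval component \(J\) of \(\{G>0\}\), the closed sets \(E_1\cap J\) and \(E_2\cap J\) cover \(J\). This uses density: a closed set containing a.e.\ point of \(J\) actually covers \(J\) once we take the union, because the complement is open and null, hence empty. The two sets can meet only at \(v\). So \(J\) lies in one of them, or \(J\ni v\) with \(J\cap(0,v]\subseteq E_1\) and \(J\cap[v,1)\subseteq E_2\); the only other arrangement would require switching at the single point \(v\) in the opposite order, which makes \(G\) negative or inconsistent.

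\textbf{Identifying the components.} On each component \(J=(a,b)\) we have \(G(a)=G(b)=0\) unless \(a=0\) or \(b=1\). Now \(\ell_1\) vanishes only at \(0\) and \(\ell_2\) only at \(1\). So the only possibility is \(J=(0,1)\), with \(G=\ell_1\) on \((0,v]\) and \(G=\ell_2\) on \([v,1)\).

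\textbf{Conclusion.} Then \(\{G=0\}\cap(0,1)=\emptyset\), and \(g=G'\) equals \(1-v\) on \((0,v)\) and \(-v\) on \((v,1)\) a.e. The only other alternative, \(\{G>0\}=\emptyset\), gives \(g=0\) a.e., which is the first case of the statement.
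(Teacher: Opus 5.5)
Your proof is correct. The first half matches the paper: equality forces both deficit identities of Lemma~\ref{lem:xi-rho-section-estimate} to vanish, which gives \(g\in\{-v,0,1-v\}\) a.e.\ and \(G=(1-v)u\), \(G=v(1-u)\), \(G=0\) a.e.\ on the three level sets.

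Where you differ is in identifying the level sets. The paper uses a saturation argument. At a point \(u\in A_+\) with \(G(u)=(1-v)u\), one has \(\int_0^u((1-v)-g(t))\de t=0\) with a nonnegative integrand, so \(g=1-v\) a.e.\ on \((0,u)\). Hence \(A_+\) is an initial interval, and by symmetry \(A_-\) is a terminal interval. Then \(\int_0^1 g=0\) and \(G=0\) on \(A_0\) force either \(g=0\) or \(A_0\) null with the switch at \(v\). This is purely measure-theoretic and short.

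You argue topologically instead. The coincidence sets \(E_1=\{G=\ell_1\}\) and \(E_2=\{G=\ell_2\}\) are closed. They cover each component \(J\) of \(\{G>0\}\), since a null open subset of \(J\) is empty, and they meet only at \(v\). Connectedness then leaves four line arrangements on \(J\), and boundary values pick the correct one. Your route gives a clear geometric picture: \(G\) must be the tent \(\min\{\ell_1,\ell_2\}=\min\{u,v\}-uv\). The cost is a bit more machinery than the paper's one-line saturation step.

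Two small points need tidying.

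\begin{itemize}
\item The endpoint step needs \(G(a)=G(b)=0\) at \emph{every} endpoint, including \(G(0)=0\) and \(G(1)=\int_0^1 g=0\). Your caveat ``unless \(a=0\) or \(b=1\)'' is unnecessary. Read literally, it would leave \(J=(0,1)\subseteq E_1\), \(J=(0,1)\subseteq E_2\), and the reversed arrangement unexcluded.
\item The reversed arrangement (\(\ell_2\) left of \(v\), \(\ell_1\) right of it) is not excluded by negativity. It is locally consistent with all the constraints. It is ruled out only by those endpoint values, so your ``identifying the components'' paragraph does the real work, not the ``negative or inconsistent'' remark.
\end{itemize}
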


Lemma~\ref{lem:xi-rho-equality-section} will be applied for the equality case sectionwise in the second coordinate, with
\[
g(u)=\partial_1 C(u,v)-v,
\qquad
G(u)=C(u,v)-uv.
\]
The LTD and RTI assumptions translate exactly into the one-dimensional sign and monotonicity constraints imposed on \(g\) and \(G\).

\begin{proof}
By Lemma~\ref{lem:xi-rho-section-estimate},
\(
    Q(g)\leq P(g)\leq2B(g).
\)
Since \(Q(g)=2B(g)\), equality holds in both inequalities:
\(
    Q(g)=P(g)=2B(g).
\)
The deficit identity \eqref{eq:one-dimensional-QP-deficit} therefore gives
\[
    g_+(u)\bigl((1-v)-g_+(u)\bigr)=0
    \qquad\text{and}\qquad
    g_-(u)\bigl(v-g_-(u)\bigr)=0
\]
for a.e.~\(u\). Hence
\begin{equation}
\label{eq:equality-section-three-values}
    g(u)\in\{-v,0,1-v\}
    \qquad\text{for a.e.~}u.
\end{equation}

Let
\[
    A_+:=\{u:g(u)=1-v\},
    \qquad
    A_0:=\{u:g(u)=0\},
    \qquad
    A_-:=\{u:g(u)=-v\},
\]
where the sets are understood up to null sets. Then
\[
    P(g)=(1-v)\lambda(A_+)=v\lambda(A_-).
\]

Since \(P(g)=2B(g)\), the deficit identity
\eqref{eq:one-dimensional-PB-deficit} also vanishes. Its three
nonnegative terms must therefore vanish separately. Using
\eqref{eq:equality-section-three-values}, we obtain
\begin{align}
\label{eq:equality-section-G-values}
    G(u)&=u(1-v)
    &&\text{for a.e.~}u\in A_+,\notag\\
    G(u)&=(1-u)v
    &&\text{for a.e.~}u\in A_-,\\
    G(u)&=0
    &&\text{for a.e.~}u\in A_0.\notag
\end{align}
We now show that \(A_+\) is an initial interval and \(A_-\) is a terminal interval, up to null sets.
Let
\[
    E_+:=A_+\cap\{u:G(u)=u(1-v)\}.
\]
Then \(A_+\setminus E_+\) is null, hence \(\operatorname*{ess\,sup}E_+=\operatorname*{ess\,sup}A_+=s\).
For \(u\in E_+\),
\[
    \int_0^u \bigl((1-v)-g(t)\bigr)\de t=0.
\]
Since \((1-v)-g(t)\ge0\) a.e., it follows that \(g(t)=1-v\) for a.e.~\(t\in(0,u)\).
Thus, for every rational \(r<s\), one can choose \(u\in E_+\) with \(u>r\), and hence \(g=1-v\) a.e.~on \((0,r)\). Letting \(r\uparrow s\) along rationals gives \(A_+=(0,s)\) up to a null set.
Thus \(A_+\) is an initial interval up to null sets.
Similarly, let \(E_-:=\{u\in A_-:G(u)=(1-u)v\}\).
For every \(u\in E_-\), using \(G(1)=0\) and \eqref{eq:equality-section-G-values}, we have
\[
    -\int_u^1 g(t)\de t
    =
    G(u)
    =
    (1-u)v.
\]
Since \(-g\leq v\) a.e., this is possible only if
\(
    g(t)=-v
    \text{ for a.e.~}t\in(u,1).
\)
Let
\(
    t:=\operatorname*{ess\,inf} A_-.
\)
Then
\(
    A_-=(t,1)
    \text{ up to a null set}.
\)
Since the three level sets \(A_+\), \(A_0\), and \(A_-\) are disjoint and cover \((0,1)\) up to a null set, we have \(0\leq s\leq t\leq1\), and \(A_0=(s,t)\) up to a null set.
On these intervals,
\begin{equation}
\label{eq:equality-section-step-form}
    g(u)
    =
    \begin{cases}
        1-v, & 0<u<s,\\
        0, & s<u<t,\\
        -v, & t<u<1,
    \end{cases}
    \qquad\text{a.e.}
\end{equation}
The condition \(\int_0^1 g=0\) gives
\(
    (1-v)s=v(1-t).
\)
Moreover, by the definition of \(P\) and the step form
\eqref{eq:equality-section-step-form},
\[
    P(g)=(1-v)s=v(1-t).
\]
If \(P(g)=0\), then \(s=0\) and \(t=1\), and therefore
\eqref{eq:equality-section-step-form} gives \(g=0\) a.e.
This gives the first alternative.
Assume now \(P(g)>0\).
By \eqref{eq:equality-section-G-values}, \(G=0\) a.e.~on \(A_0=(s,t)\), while
\eqref{eq:equality-section-step-form} gives
\[
    G(u)=P(g)
    \qquad\text{for }s<u<t.
\]
Hence \(s=t\).
Hence
\(
    (1-v)s=v(1-s),
\)
and \(s=v\).
Consequently, \eqref{eq:equality-section-step-form} gives
\[
    g(u)
    =
    \begin{cases}
        1-v, & 0<u<v,\\
        -v, & v<u<1,
    \end{cases}
    \qquad\text{for a.e.~}u\in(0,1).
\]
This is the second alternative, and the proof is complete.
\end{proof}

\begin{proof}[Proof of Theorem~\ref{thm:ltd-rti-xi-rho}]
Fix \(v\in(0,1)\).
We use the kernel version \(h_v(u)=K_C(u,[0,v])\) from \eqref{eq:h-definitions}.
For fixed \(v\), the identity
\[
    C(u,v)=\int_0^u h_v(t)\de t
\]
implies \(h_v=\partial_1 C(\cdot,v)\) for a.e.~\(u\). All subsequent
differential inequalities are hence understood a.e.~in \(u\).
Put \(g_v(u):=h_v(u)-v\) and \(G_v(u):=C(u,v)-uv=\int_0^u g_v(t)\de t\).
Since \(C(1,v)=v\), we have
\(
    \int_0^1 g_v(u)\de u=0.
\)
The LTD assumption \eqref{eq:ltd_definition} says that \(L_v(u):=C(u,v)/u=v+G_v(u)/u\) is non-increasing on \((0,1]\).
Since \(L_v(1)=v\), this implies
\begin{equation}
\label{eq:ltd-rti-Gv-positive}
    G_v(u)\geq 0,
    \qquad 0\leq u\leq 1.
\end{equation}
Moreover, differentiating \(L_v\) at points where \(h_v\) exists gives
\begin{equation}
\label{eq:ltd-rti-left-differential-bound}
    u g_v(u)\leq G_v(u)
    \qquad\text{for a.e.~}u\in(0,1).
\end{equation}
Similarly, the RTI assumption says that \(R_v(u):=(v-C(u,v))/(1-u)=v-G_v(u)/(1-u)\) is non-increasing on \([0,1)\).
Differentiating \(R_v\) gives
\begin{equation}
\label{eq:ltd-rti-right-differential-bound}
    -(1-u)g_v(u)\leq G_v(u)
    \qquad\text{for a.e.~}u\in(0,1).
\end{equation}
Recall \eqref{eq:one-dimensional-quantities} and write
\[
    Q_v:=Q(g_v)=\int_0^1 g_v(u)^2\de u,
    \quad
    B_v:=B(g_v)=\int_0^1 G_v(u)\de u,
    \quad
    P_v:=P(g_v)=\int_0^1 (g_v(u))_+\de u.
\]
Since \(0\leq h_v\leq1\), we have
\[
    -v\leq g_v(u)\leq1-v
    \qquad\text{for a.e.~}u.
\]
Together with \(\int_0^1 g_v\de u=0\),
\eqref{eq:ltd-rti-Gv-positive},
\eqref{eq:ltd-rti-left-differential-bound}, and
\eqref{eq:ltd-rti-right-differential-bound}, this shows that \(g_v\)
satisfies the hypotheses of Lemma~\ref{lem:xi-rho-section-estimate}.
Therefore
\[
    0\leq Q_v\leq P_v\leq2B_v.
\]
In particular,
\begin{equation}
\label{eq:ltd-rti-section-estimate}
    \int_0^1 (h_v(u)-v)^2\de u
    =
    Q_v
    \leq
    2B_v
    =
    2\int_0^1\bigl(C(u,v)-uv\bigr)\de u.
\end{equation}
Integrating \eqref{eq:ltd-rti-section-estimate} over \(v\) gives the result.
Indeed, using \(\int_0^1 h_v(u)\de u=v\), we have
\[
    \xi(C)
    =
    6\int_0^1\int_0^1 h_v(u)^2\de u\de v-2
    =
    6\int_0^1\int_0^1 (h_v(u)-v)^2\de u\de v,
\]
whereas \eqref{eq:rho-definition} gives \(\rho(C)=12\int_0^1\int_0^1 (C(u,v)-uv)\de u\de v\).
Therefore
\(
    \xi(C)\leq\rho(C).
\)

Assume now that equality \(\xi(C)=\rho(C)\) holds. Since
\[
    \xi(C)
    =
    6\int_0^1 Q_v\de v,
    \qquad
    \rho(C)
    =
    12\int_0^1 B_v\de v,
\]
we have
\(
    \int_0^1 \bigl(2B_v-Q_v\bigr)\de v=0.
\)
By Lemma~\ref{lem:xi-rho-section-estimate}, \(2B_v-Q_v\geq0\) for every
\(v\in(0,1)\).
Hence,
\(
    Q_v=2B_v
    \text{ for a.e.~}v\in(0,1).
\)
Applying Lemma~\ref{lem:xi-rho-equality-section} sectionwise, for almost
every \(v\in(0,1)\) one has either
\[
    h_v(u)=v
    \quad\text{for a.e.~}u,
    \qquad\text{or}\qquad
    h_v(u)=\mathbf 1_{\{u\leq v\}}
    \quad\text{for a.e.~}u.
\]
Let
\(
    V_\Pi
    :=
    \{
        v\in(0,1):
        \int_0^1 |h_v(u)-v|\de u=0
    \}
\)
and
\(
    V_M
    :=
    \{
        v\in(0,1):
        \int_0^1
            \left|h_v(u)-\mathbf 1_{\{u\leq v\}}\right|
        \de u=0
    \}.
\)
The preceding paragraph says that \(V_\Pi\cup V_M\) has full Lebesgue measure in \((0,1)\).

We claim that \(V_\Pi\) and \(V_M\) cannot both have positive measure.
Indeed, if both had positive measure, we could choose \(a\in V_\Pi\) and \(b\in V_M\) with \(a\neq b\).
Let \(N_a,N_b\subseteq[0,1]\) be \(\lambda\)-null sets such that
\[
    h_a(u)=a
    \quad\text{for }u\notin N_a,
    \qquad
    h_b(u)=\mathbf 1_{\{u\leq b\}}
    \quad\text{for }u\notin N_b.
\]
Since \(v\mapsto h_v(u)=K_C(u,[0,v])\) is non-decreasing for every \(u\), the following alternatives are impossible.
If \(b<a\), then for every
\(
    u\in (0,b)\setminus(N_a\cup N_b)
\)
we have
\[
    h_b(u)=1>a=h_a(u),
\]
contradicting \(h_b(u)\leq h_a(u)\).
If \(a<b\), then for every
\(
    u\in (b,1)\setminus(N_a\cup N_b)
\)
we have
\[
    h_a(u)=a>0=h_b(u),
\]
contradicting \(h_a(u)\leq h_b(u)\).
Therefore either \(V_\Pi\) or \(V_M\) is null.
If \(V_M\) is null, then \(h_v(u)=v\) for \(\lambda^2\)-a.e.~\((u,v)\).
By Fubini's theorem, there is a full-measure set \(E\subseteq(0,1)\) such
that, for every \(v\in E\), the identity \(h_v(u)=v\) holds for
\(\lambda\)-a.e.~\(u\). Hence, for every \(v\in E\) and every
\(u\in[0,1]\),
\[
    C(u,v)
    =
    \int_0^u h_v(t)\de t
    =
    uv.
\]
Since \(E\) has full measure in \((0,1)\), it is dense in \([0,1]\). For
fixed \(u\), both maps \(v\mapsto C(u,v)\) and \(v\mapsto uv\) are continuous,
so the identity extends to every \(v\in[0,1]\). Thus \(C=\Pi\).

If \(V_\Pi\) is null, then
\(h_v(u)=\mathbf 1_{\{u\leq v\}}\) for \(\lambda^2\)-a.e.~\((u,v)\).
By Fubini's theorem, there is a full-measure set \(E\subseteq(0,1)\) such
that, for every \(v\in E\), this identity holds for \(\lambda\)-a.e.~\(u\).
Hence, for every \(v\in E\) and every \(u\in[0,1]\),
\[
    C(u,v)
    =
    \int_0^u \mathbf 1_{\{t\leq v\}}\de t
    =
    \min\{u,v\}.
\]
Again \(E\) is dense, and continuity in \(v\) extends the identity to every
\(v\in[0,1]\). Thus \(C=M\).

It remains to prove the statement for LTI and RTD copulas.
Assume that \(C\) is LTI and RTD.
Let \(C^\sigma\) be the copula of \((U,1-V)\), where \((U,V)\sim C\), that is, \(C^\sigma(u,v)=u-C(u,1-v)\).
If \(C\) is LTI and RTD, then \(C^\sigma\) is LTD and RTI: both \(C^\sigma(u,v)/u=1-C(u,1-v)/u\) and \((v-C^\sigma(u,v))/(1-u)=1-((1-v)-C(u,1-v))/(1-u)\) are non-increasing in \(u\).
Applying the first part to \(C^\sigma\) gives
\(
    \xi(C^\sigma)\leq \rho(C^\sigma).
\)
Moreover, \(\xi(C^\sigma)=\xi(C)\) and \(\rho(C^\sigma)=-\rho(C)\).
Hence
\(
    \xi(C)\leq -\rho(C).
\)
The equality statement follows from the equality case in the first part, since
\(C^\sigma=\Pi\) is equivalent to \(C=\Pi\), while \(C^\sigma=M\) is equivalent
to \(C=W\).

Conversely, \(\Pi\) and \(M\) are LTD and RTI and satisfy \(\xi(\Pi)=\rho(\Pi)=0\), \(\xi(M)=\rho(M)=1\).
Similarly, \(\Pi\) and \(W\) are LTI and RTD and satisfy \(\xi(\Pi)=-\rho(\Pi)=0\), \(\xi(W)=-\rho(W)=1\).
\end{proof}

We conclude the paper with examples illustrating that both tail conditions in Theorem~\ref{thm:ltd-rti-xi-rho} are needed and that the resulting Spearman bound is directional.

\begin{example}[LTD alone does not imply \(\xi\leq\rho\)]
\label{rem:ltd-alone-not-sufficient}
The RTI assumption in Theorem~\ref{thm:ltd-rti-xi-rho} cannot be omitted.
Let \(C\) be the checkerboard copula on the partition
\(
    I_i=\left[\frac{i-1}{4},\frac{i}{4}\right]
\)
for
\(
    i=1,\dots,4,
\)
with cell probabilities \(p_{ij}=m_{ij}/4\), where
\[
    M=(m_{ij})_{i,j=1}^4
    =
    \begin{pmatrix}
        1/2&1/2&0&0\\
        0&0&1/2&1/2\\
        1/4&1/4&1/4&1/4\\
        1/4&1/4&1/4&1/4
    \end{pmatrix}.
\]
Since \(M\) is doubly stochastic, this defines a copula.
Its density is shown in the third panel of Figure~\ref{fig:checkerboards}.
For \(u\in I_i\), write
\(
    H_i(y)=K_C(u,[0,y]).
\)
Then the four conditional distribution functions have the form
\[
    (H_1(y),H_2(y),H_3(y),H_4(y))
    =
    (A(y),B(y),y,y),
\]
where
\(
    A(y)=\min\{2y,1\}
\)
and
\(
    B(y)=\max\{2y-1,0\}.
\)
In particular, one has
\(
    B(y)\leq A(y)
\)
and
\(
    A(y)+B(y)=2y.
\)
Hence, for every \(y\),
\[
    H_2(y)\leq H_1(y),
    \qquad
    H_3(y)=\frac{H_1(y)+H_2(y)}{2},
    \qquad
    H_4(y)=\frac{H_1(y)+H_2(y)+H_3(y)}{3}.
\]
Thus, the left-tail averages are non-increasing, and \(C\) is LTD.
However, \(C\) is not RTI.
Indeed, at \(y=1/2\) one has
\[
    (H_1(1/2),H_2(1/2),H_3(1/2),H_4(1/2))
    =
    \left(1,0,\frac12,\frac12\right),
\]
so the second strip value is smaller than the average of the following right-tail values.
Consider the associated \(4\times4\) checkerboard matrix
\(
    \Delta=(p_{ij})_{i,j=1}^4=\frac14 M.
\)
Applying the explicit checkerboard formulas from \cite[Prop.~3.3]{rockel2025measures} gives
\(
    \xi(C)=\frac14
\)
and
\(
    \rho(C)=\frac{3}{16}.
\)
Consequently,
\(
    \xi(C)-\rho(C)
    =
    \frac1{16}>0.
\)
Thus LTD alone is not sufficient for \(\xi(C)\leq\rho(C)\).
\end{example}

\begin{example}[RTI alone does not imply \(\xi\leq\rho\)]
\label{rem:rti-alone-not-sufficient}
The LTD assumption in Theorem~\ref{thm:ltd-rti-xi-rho} cannot be omitted.
Let \(C\) be the checkerboard copula with matrix
\[
    M
    =
    \begin{pmatrix}
        1/4&1/4&1/4&1/4\\
        1/4&1/4&1/4&1/4\\
        1/2&1/2&0&0\\
        0&0&1/2&1/2
    \end{pmatrix}.
\]
This is the \(180^\circ\)-rotation of the checkerboard matrix in
Example~\ref{rem:ltd-alone-not-sufficient}, and is again doubly stochastic.
Hence it defines a copula, and its density is shown in the fourth panel of Figure~\ref{fig:checkerboards}.

We verify that \(C\) is RTI.
As before, a version of the conditional kernel is constant on each vertical strip.
For \(u\in I_i\), write
\[
    H_i(y)=K_C(u,[0,y]).
\]
For fixed \(y\), set \(a_i(y)=H_i(y)\).
Since the first two rows of \(M\) are uniform, while the third and fourth rows are uniform on the lower and upper halves, respectively, we have
\[
    a_1(y)=y,
    \quad
    a_2(y)=y,
    \quad
    a_3(y)=
    \begin{cases}
        2y, & 0\leq y\leq 1/2,\\
        1, & 1/2\leq y\leq1,
    \end{cases}
    \quad
    a_4(y)=
    \begin{cases}
        0, & 0\leq y\leq1/2,\\
        2y-1, & 1/2\leq y\leq1.
    \end{cases}
\]
For a step function with equal-width steps, RTI is equivalent to the condition that each step is at least the average of the steps to its right, namely
\[
    a_m(y)
    \geq
    \frac{1}{4-m}\sum_{i>m}a_i(y),
    \qquad m=1,2,3.
\]
These inequalities are immediate from the preceding formulas.
Indeed, if
\(0\leq y\leq1/2\), then
\[
    a_1(y)=y
    =
    \frac{y+2y+0}{3},
    \qquad
    a_2(y)=y
    =
    \frac{2y+0}{2},
    \qquad
    a_3(y)=2y\geq0=a_4(y).
\]
If \(1/2\leq y\leq1\), then
\[
    a_1(y)=y
    =
    \frac{y+1+(2y-1)}{3},
    \quad
    a_2(y)=y
    =
    \frac{1+(2y-1)}{2},
    \quad
    a_3(y)=1\geq2y-1=a_4(y).
\]
Consequently, for every \(y\),
\(
    u\longmapsto \Prob(V\leq y\mid U>u)
\)
is non-increasing, and hence \(C\) is RTI.
However, \(C\) is not LTD.
Taking \(y=1/2\), the left-tail averages
satisfy
\[
    \Prob(V\leq1/2\mid U\leq1/2)
    =
    \frac{a_1(1/2)+a_2(1/2)}{2}
    =
    \frac12,
\]
whereas
\[
    \Prob(V\leq1/2\mid U\leq3/4)
    =
    \frac{a_1(1/2)+a_2(1/2)+a_3(1/2)}{3}
    =
    \frac23.
\]
Thus the map
\(
    u\longmapsto \frac{C(u,1/2)}{u}
    =
    \Prob(V\leq1/2\mid U\leq u)
\)
is not non-increasing, so \(C\) is not LTD.
Finally, applying the checkerboard formulas gives
\(
    \xi(C)=\frac14
\)
and
\(
    \rho(C)=\frac{3}{16}.
\)
Therefore
\(
    \xi(C)-\rho(C)=\frac1{16}>0,
\)
so RTI alone is not sufficient for \(\xi(C)\leq\rho(C)\).
\end{example}

\begin{example}[The Spearman bound is directional]
\label{rem:ltd-rti-spearman-bound-directional}
The implication in Theorem~\ref{thm:ltd-rti-xi-rho} is also genuinely
directional.
In general, LTD and RTI do \emph{not} imply
\(
    \xi(C^\top)\leq\rho(C).
\)

Let \(C\) be the checkerboard copula on the partition
\(
    I_i=\left[\frac{i-1}{4},\frac{i}{4}\right]
\)
for
\(
    i=1,\dots,4,
\)
with cell probabilities \(p_{ij}=m_{ij}/4\), where
\[
    M=(m_{ij})_{i,j=1}^4
    =
    \begin{pmatrix}
        1/2&0&1/2&0\\
        1/2&0&1/2&0\\
        0&1/2&0&1/2\\
        0&1/2&0&1/2
    \end{pmatrix}.
\]
\(M\) is doubly stochastic and \(C\) is SI, so \(C\) is in particular LTD and RTI.
$C$ is the same as the copula of Example~\ref{rem:si-kendall-bound-directional}, shown in the second panel of Figure~\ref{fig:checkerboards}.
Applying the explicit checkerboard formulas from \cite[Prop.~3.3]{rockel2025measures} with
\[
    \Delta=(p_{ij})_{i,j=1}^4=\frac14 M
\]
gives
\(
    \xi(C^\top)=\frac12
\)
and
\(
    \rho(C)=\frac38.
\)
Consequently,
\(
    \xi(C^\top)>\rho(C).
\)
Thus, the Spearman bound in Theorem~\ref{thm:ltd-rti-xi-rho} holds for \(\xi(C)\), but not necessarily for the reversed coefficient \(\xi(C^\top)\).
\end{example}

\bibliographystyle{plainnat}
\bibliography{Literature}

\end{document}